\documentclass[10pt,a4paper]{article}
\usepackage[utf8x]{inputenc}
\usepackage{amsmath,amsthm,color}
\usepackage{amsfonts}
\usepackage{amssymb}
\usepackage{graphicx}
  \usepackage{tikz}
  \usetikzlibrary{shapes,backgrounds}

  \def\firstcircle{(0,0.5) ellipse (0.5cm and 2 cm)}
  \def\secondcircle{(-1,-1.2) ellipse (1.1cm and 0.2 cm)}
  \def\thirdcircle{(1,-1.2) ellipse (1.1cm and 0.2 cm)}
  \def\fourthcircle{(300:0 cm) circle (2cm)}

\newtheorem{theorem}{Theorem}[section]

\newtheorem{definition}[theorem]{Definition}
\newtheorem{proposition}[theorem]{Proposition}

\newtheorem{conjecture}[theorem]{Conjecture}
\newtheorem{remark}[theorem]{Remark}
\newtheorem{problem}[theorem]{Problem}
\newtheorem{example}[theorem]{Example}

\newtheorem*{ack}{Acknowledgement}

\newtheorem{lemma}[theorem]{Lemma}

%\sloppy

\begin{document}
\title{On the structure of monomial complete intersections in positive characteristic}
\author{Samuel Lundqvist and Lisa Nicklasson}

\maketitle

\begin{abstract}
In this paper we study the Lefschetz properties of monomial complete intersections in positive characteristic. 
We give a complete classification of the strong Lefschetz property when the number of variables is at least three, which proves a conjecture by Cook II. We also extend earlier results on the weak Lefschetz property by dropping the assumption on the residue field being infinite, and by giving new sufficient criteria. 

\end{abstract}

\section{Introduction}

Let $k$ be a field and let $A=k[x_1, \ldots, x_n]/(x_1^{d_1}, \ldots, x_n^{d_n})$. The algebra $A$ is considered a graded algebra $A=\oplus_{i \ge 0} A_i$ in the usual sense, i.e. $A_i$ consists of the homogeneous polynomials of degree $i$. Recall that an artinian graded algebra has the strong Lefschetz property (SLP) if there is a linear form such that multiplication by a $d$'th power of this form has maximal rank in every degree, for all $d$. 
When the characteristic of $k$ is zero, Stanley \cite{stanley} observed that the algebra $A$ possesses the SLP. An immediate corollary to Stanley's result was a proof of the  Fr\"oberg conjecture \cite{froberg} in $n$ variables and $n +1$ forms.

When the characteristic of $k$ is positive, the algebra $A$ does not necessarily have the SLP. In fact, in many situations it also fails the weak Lefschetz property (WLP). An artinian graded algebra has the WLP if there is a linear form such that multiplication by this form has maximal rank in every degree. Thus a natural problem in characteristic $p$ is to characterize which monomial complete intersections that have the SLP and the WLP. Partial results have appeared in \cite{brenner, cook, kustin, li, lind, vraciu}. In this paper, we continue this journey using purely algebraic methods. 
%Contrary to many earlier results, we do not need to restrict to an infinite residue field. 
For a survey of the Lefschetz properties, see \cite{atour}.

Our main result is Theorem \ref{thm:slp}, where we fully classify the SLP when $n \geq 3$. Namely, let $A = k[x_1, \ldots, x_n]/(x_1^{d_1}, \ldots, x_n^{d_n})$ where $n \ge 3$, $d_i \ge 2$ for all $i$, and where $k$ is a field of characteristic $p>0$. Let $d_1=\max(d_1, \dots, d_n)$ and write $d_1=N_1p+r_1$ with $0<r_1\le p$.
We show that $A$ has the SLP if and only if one of the following two conditions hold.
\begin{enumerate}
\item  $\sum_{i = 1}^n (d_i-1) <p$,
\item $d_1 > p$, $d_i \leq p$ for $i =2, \ldots, n$ and $\sum_{i=2}^n(d_i-1) \le \min(r_1,p-r_1)$.
\end{enumerate} 
This settles \cite[Conjecture 7.6]{cook}.

We also have results on the WLP. In Proposition \ref{prop:wlpfinite}, we remark that when $I \subset k[x_1,\ldots,x_n]$ is a monomial ideal and $k'$ is a field extension of $k$, then 
$A = k[x_1,\ldots,x_n]/I$ has the WLP if and only if $A \otimes_k k'$ has the WLP. 
This proposition makes it possible for us to finalize the classification of the WLP for uniform degrees.  

We then give a series of sufficient conditions for the presence of the WLP for mixed degrees, which we believe cover a large part of the algebras with the WLP. However, the complete characterization of the WLP is still an open question. We end up by connecting the results on the WLP to the Fr\"oberg conjecture \cite{froberg}.

 \section{Stanley's result in positive characteristic}\label{section Stanley}
We begin by adopting the proof in \cite{reid} of Stanley's result to positive characteristic. The result will later be used to give sufficient conditions for the presence of the SLP and the WLP. 

Let $A=k[x_1, \ldots, x_n]/(x_1^{d_1}, \ldots, x_n^{d_n})=\oplus_{i \ge 0} A_i$, where $k$ is a field. The \emph{Hilbert function} is defined as $H(i)=\dim_k(A_i)$, the dimension of $A_i$ as a vector space over $k$. A homogeneous element $f$ of degree $d$ defines a linear map $A_{i} \to A_{i+d}$ by $a \mapsto f \cdot a$. This map is said to have \emph{maximal rank} if it is injective or surjective. We say that multiplication by $f$ has \emph{maximal rank in every degree} if the induced multiplication map has maximal rank for each $i$.   

Let $s=x_1+ \dots + x_n$. It was proved by Stanley in \cite{stanley} that multiplication by $s^m$, where $m$ is a positive integer, has maximal rank in every degree, when the coefficient field $k$ has characteristic zero. We want to find out when this is true for a field of positive characteristic. 

Note that $t=\sum_{i=1}^n(d_i-1)$ is the greatest index for which $A_t \neq 0$. In $A_t$ we have only one power product, namely $x_1^{d_1\!-1} \cdots x_n^{d_n\!-1}$. This monomial induces a bijection $A_i \leftrightarrow A_{t-i}$, for $0 \le i \le t$, by $f \mapsto x_1^{d_1\!-1} \cdots x_n^{d_n\!-1}/f$ for monomial $f$. This shows that the Hilbert function is symmetric about $t/2$. It can also be seen that it is weakly increasing up to $t/2$. For multiplication by $s^m$ to have maximal rank in every degree, we want it to be injective up to a certain degree, and surjective for higher degree, by the symmetry of the Hilbert function. We will see later in this section that it is in fact enough to prove the injectiveness.  
To prove the injectiveness we use the same arguments as the proof of Theorem 5 in \cite{reid}. The main difference is that we here also take into consideration that the field $k$ is of positive characteristic. The proof uses formal derivatives, so for this purpose, we introduce the notation $f'_{x_j}$ for the formal derivative of a polynomial $f$, with respect to the variable $x_j$. 

\begin{lemma}\label{lemma:deriv}
 Suppose $f ,h \in k[x_1, \dots ,x_n]$ is such that $h^mf \in (x_1^{d_1}, \ldots, x_n^{d_n})$. Then 
 \[h^{m+1}f'_{x_j} \in (x_1^{\bar{d_1}}, \ldots , x_n^{\bar{d_n}}),\]
 where $\bar{d_j}=d_j-1$ and $\bar{d_i}=d_i$ for all $i \ne j$. 
 %\[(x_1+ \dots + x_n)^{m+1}f'_{x_j} \in (x_1^{d_1}, \ldots, x_{j-1}^{d_{j-1}}, x_{j}^{d_j-1},x_{j-1}^{d_{j+1}}, \ldots , x_n^{d_n}).\]
\end{lemma}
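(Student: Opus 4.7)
The plan is a one-step argument via formal differentiation, closely mimicking the characteristic-zero proof, with one small subtlety that arises in positive characteristic.

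Since $h^m f \in (x_1^{d_1},\ldots,x_n^{d_n})$, I can write
\[
h^m f \;=\; \sum_{i=1}^n g_i\, x_i^{d_i}
\]
for some polynomials $g_i$. Then I differentiate both sides with respect to $x_j$, using the product rule on the left and on each summand $g_i x_i^{d_i}$ on the right. This gives
\[
m\,h^{m-1} h'_{x_j} f + h^m f'_{x_j} \;=\; \sum_{i=1}^n (g_i)'_{x_j}\, x_i^{d_i} \;+\; d_j\, g_j\, x_j^{d_j-1}.
\]
Multiplying through by $h$ isolates the desired term:
\[
h^{m+1} f'_{x_j} \;=\; h\sum_{i=1}^n (g_i)'_{x_j}\, x_i^{d_i} \;+\; d_j\, g_j\, h\, x_j^{d_j-1} \;-\; m\,h'_{x_j}\,(h^m f).
\]

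Next I check that each of the three summands on the right lies in $(x_1^{\bar d_1},\ldots,x_n^{\bar d_n})$. The first summand is in $(x_1^{d_1},\ldots,x_n^{d_n})$ and hence in the larger ideal obtained by replacing $d_j$ by $d_j-1$. The third summand $m\,h'_{x_j}\,(h^m f)$ lies in $(x_1^{d_1},\ldots,x_n^{d_n})$ by the hypothesis on $h^m f$, and is therefore again in the target ideal. The middle term $d_j g_j h\, x_j^{d_j-1}$ is manifestly divisible by $x_j^{\bar d_j}$.

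The only place where positive characteristic could threaten the argument is in the coefficient $d_j$ of the middle term: if $p\mid d_j$, that term vanishes in $k$. However, its vanishing only removes a term already belonging to the target ideal, so the containment is unaffected. This is the one point worth highlighting but it does not constitute a real obstacle; the proof goes through verbatim in any characteristic.
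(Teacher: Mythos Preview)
Your proof is correct and follows essentially the same argument as the paper: write $h^m f$ as a combination $\sum g_i x_i^{d_i}$, differentiate with respect to $x_j$, multiply by $h$, and observe that each resulting term lies in the target ideal. The paper's proof is slightly terser (it assumes $j=1$ and does not single out the characteristic-$p$ issue you mention), but the logic is identical.
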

\begin{proof}
 For simplicity we may assume that $x_j=x_1$, and denote the derivatives by just $f'$. We know that $h^mf=g_1x_1^{d_1} + \dots +g_nx_n^{d_n}$ for some $g_1, \ldots , g_n$. Now take the derivative of $h^mf$ with respect to $x_1$. We get
 \[
  mh^{m-1}h'f + h^mf' = g_1'x_1^{d_1}+d_1g_1x_1^{d_1-1} + g_2'x_2^{d_2} + \dots +g_n'x_n^{d_n}.
 \]
 After multiplication by $h$ we see that
 \[
 mh^{m}h'f + h^{m+1}f' \in (x_1^{d_1-1},x_2^{d_2} , \dots , x_n^{d_n}).
\]
Since $h^mf \in (x_1^{d_1}, \ldots, x_n^{d_n})$ we can conclude that $h^{m+1}f' \in (x_1^{d_1-1},x_2^{d_2} , \dots , x_n^{d_n}).$
\end{proof}

\begin{theorem}\label{thm:stanley}
 Let $A=k[x_1, \ldots, x_n]/(x_1^{d_1}, \ldots, x_n^{d_n})$, where $k$ is a field of characteristic $p >0$, let 
 $t = \sum_{i=1}^n (d_i - 1)$. Let $m$ be a positive integer such that $m+t <2p$. Then the map $A_i \to A_{i+m}$ given by
 \[
  f \mapsto f \cdot (x_1+ \dots +x_n)^m 
 \]
is injective for $i \le (t-m)/2$.
\end{theorem}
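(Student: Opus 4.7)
The plan is to follow the Reid-style argument of \cite{reid} via iterated formal derivatives, but to track carefully which factorials and multinomial coefficients survive in characteristic $p$. Let $I = (x_1^{d_1}, \ldots, x_n^{d_n})$ and $s = x_1 + \cdots + x_n$. Fix a homogeneous representative $f \in k[x_1, \ldots, x_n]$ of degree $i \le (t-m)/2$ with $s^m f \in I$; I aim to show $f \in I$, arguing by contradiction.

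First, iterate Lemma \ref{lemma:deriv} to obtain, for every multi-index $\beta = (\beta_1, \ldots, \beta_n)$ with $0 \le \beta_j \le d_j - 1$,
\[
 s^{m + |\beta|} \cdot \partial^{\beta} f \in (x_1^{d_1 - \beta_1}, \ldots, x_n^{d_n - \beta_n}),
\]
where $\partial^{\beta} = \partial_{x_1}^{\beta_1} \cdots \partial_{x_n}^{\beta_n}$. This follows by induction on $|\beta|$, applying the lemma one variable at a time (each application uses $h = s$ and adds one to the power while lowering one exponent by one).

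Now replace $f$ by its standard-monomial representative modulo $I$. If $f \notin I$, some monomial $c_\beta x^\beta$ in this representative satisfies $|\beta| = i$, $\beta_j \le d_j - 1$, and $c_\beta \ne 0$. Apply the iterated identity at this $\beta$: since $|\beta| = \deg f$ and $f$ is homogeneous, $\partial^{\beta} f$ collapses to the constant $c_\beta \cdot \beta_1! \cdots \beta_n!$, giving
\[
 c_\beta \, \beta_1! \cdots \beta_n! \cdot s^{m + i} \in (x_1^{d_1 - \beta_1}, \ldots, x_n^{d_n - \beta_n}).
\]
The contradiction will come from showing that both the scalar and $s^{m+i}$ are nonzero in this containment. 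The hypothesis $m + t < 2p$ combined with $i \le (t-m)/2$ gives $m + i \le (m+t)/2 < p$, hence also $\beta_j \le i < p$. Therefore each $\beta_j!$ is nonzero in $k$, and every multinomial coefficient in the expansion of $s^{m+i}$ is a ratio of factorials of integers strictly less than $p$, hence nonzero modulo $p$. The quotient $k[x_1, \ldots, x_n]/(x_1^{d_1 - \beta_1}, \ldots, x_n^{d_n - \beta_n})$ has top degree $\sum_j (d_j - \beta_j - 1) = t - i$, and since $m + i \le t - i$ we may choose $\alpha$ with $|\alpha| = m+i$ and $\alpha_j \le d_j - \beta_j - 1$ for all $j$; the monomial $x^{\alpha}$ then appears in $s^{m+i}$ with nonzero coefficient and survives in the quotient, contradicting the displayed containment.

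The main obstacle is bookkeeping rather than a new idea: one has to guarantee simultaneously (a) that $\beta!$ is nonzero in $k$ so that $\partial^{\beta} f$ actually detects $c_\beta$, (b) that $s^{m+i}$ expands with nonzero multinomial coefficients, and (c) that there is enough room in degree $m + i$ of the smaller quotient for at least one monomial of $s^{m+i}$ to survive. The single numerical hypothesis $m + t < 2p$ together with $i \le (t-m)/2$ controls all three at once, which is precisely why this is the right bound.
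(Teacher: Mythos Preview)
Your argument is correct and rests on the same engine as the paper's proof---iterated use of Lemma~\ref{lemma:deriv}---but you organize it differently. The paper argues by induction on $\deg f$: at each step one differentiates once with respect to a variable in which the representative $F$ has positive degree (such a variable exists because $\deg f<p$), passes to the ring with that $d_j$ lowered by one, checks that the hypothesis $(m+1)+(t-1)<2p$ persists, and applies the inductive hypothesis to $F'_{x_j}$. You instead unroll this induction in one stroke: choosing $\beta$ to be the exponent of a surviving monomial of $f$, you differentiate $|\beta|=i$ times so that $\partial^{\beta}f$ collapses to the scalar $c_\beta\,\beta!$, and then verify directly that $s^{m+i}$ is nonzero in the reduced quotient $k[x_1,\ldots,x_n]/(x_1^{d_1-\beta_1},\ldots,x_n^{d_n-\beta_n})$. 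The two routes are logically equivalent; yours is a bit more compact and makes explicit exactly which factorials ($\beta_j!$ with $\beta_j\le i<p$) and multinomial coefficients ($\binom{m+i}{\alpha}$ with $m+i<p$) must survive, at the small cost of verifying the room-in-degree inequality $m+i\le t-i$ by hand rather than having it emerge from the induction.
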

\begin{proof}
Let $s = x_1+\cdots + x_n$. Suppose that $f$ is a nonzero homogeneous element such that $f \cdot s^m=0$. We shall prove that $\deg f >(t-m)/2$ by induction over the degree of $f$. Assume first that $\deg f =0$. In this case $s^m=0$. We want to show that $(t-m)/2<0$, which is to say that $m>t$. Suppose, for a contradiction, that $m \le t$. Since $m+t<2p$ we get $2m< 2p$, i. e. $m<p$. Let us now look at the expansion of $s^m$. Since $m \le t$ we can find a monomial $x_1^{\alpha_1} \cdots x_n^{\alpha_n}$ in the expansion, with $\alpha_i< d_i$ for all $i$. The coefficient of this monomial is not divisible by $p$, because $m<p$. But this contradicts $s^m=0$. Hence $m>t$.
 
Now assume $\deg f >0$, and that the claim is true for all homogeneous polynomials of lower degree. If $\deg f \ge p$ we are already done, since $p > (t-m)/2$ by the assumption on $m$. Therefore we may assume that $\deg f <p$. Let $F$ be a homogeneous element in $k[x_1, \ldots, x_n]$, such that the image of $F$ in $A$ is $f$. Since $\deg F = \deg f$ we can find a variable $x_j$ such that the image of $F'_{x_j}$ in $A$ is nonzero. By Lemma \ref{lemma:deriv} the image of $s^{m+1}F'$ is zero in $k[x_1, \ldots, x_n]/(x_1^{\bar{d_1}}, \ldots , x_n^{\bar{d_n}})$, where $\bar{d_j}=d_j-1$ and $\bar{d_i}=d_i$ for all $i \ne j$. Note that we have increased $m$ by one, and decreased $t$ by one. The condition $(m+1)+(t-1)=m+t<2p$ is still true. By the inductive assumption      
\[ \deg f = \deg F = \deg F' +1 > \frac{(t-1)-(m+1)}{2}+1 = \frac{t-m}{2}. \]
\end{proof}

\begin{remark}
 The element $x_1+ \dots +x_n$ in Theorem \ref{thm:stanley} above can be replaced by any linear form $c_1x_1+ \dots +c_nx_n$, where all the $c_i$'s are nonzero. 
\end{remark}

\begin{remark}
 There is a small mistake in the proof of \cite[Theorem 5]{reid}. The derivative is taken w.r.t. $x_n$, where $d_n = \max(d_1, \ldots, d_n)$. But with this choice of variable, the derivative of $F$ might be 0, and the proof fails. For example $(x_1+x_2+x_3)^3(x_1-x_2)=0$ in $k[x_1,x_2,x_3]/(x_1^2,x_2^2,x_3^3)$, but the derivative of $x_1-x_2$ w. r. t. $x_3$ is $0$. The problem is solved by taking the derivative w.r.t. another variable.
\end{remark}

Note that for $m>t$ the theorem is trivial, because there is nothing to prove. In that case we don't need any condition on the characteristic of $k$. It is also possible to formulate a condition on the $d_i$'s, instead of $p$, as we will see in the following theorem.

\begin{theorem} \label{thm:stanleydi}
Let $A=k[x_1, \ldots, x_n]/(x_1^{d_1}, \ldots, x_n^{d_n})$, where $k$ is any field and $t = \sum_{i=1}^n (d_i - 1)$. If $\max(d_1, \ldots, d_n)>(t+m)/2$ for an integer $m$, then the map $A_i \to A_{i+m}$ given by 
\[f \mapsto f \cdot (x_1+ \dots + x_n)^m\] 
is injective for all $i \le (t-m)/2$. 
\end{theorem}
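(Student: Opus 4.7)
The plan is to exploit the fact that the hypothesis forces the target degree to fall below $d_1$, so the relation $x_1^{d_1}$ plays no role and we can replace $s$ by a single variable via a linear change of coordinates. After renaming variables we may assume $d_1=\max(d_1,\ldots,d_n)$, so $d_1>(t+m)/2$; combined with $i\le(t-m)/2$ this gives $i+m\le(t+m)/2<d_1$. Suppose $f\in A_i$ satisfies $fs^m=0$ in $A$ and lift $f$ to a homogeneous $F\in R=k[x_1,\ldots,x_n]$ of degree $i$. Then $Fs^m\in(x_1^{d_1},\ldots,x_n^{d_n})$ has degree $i+m<d_1$, so in any homogeneous decomposition $Fs^m=\sum_j g_j x_j^{d_j}$ the coefficient $g_1$ would have negative degree and must vanish. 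Hence $Fs^m\in J:=(x_2^{d_2},\ldots,x_n^{d_n})$.

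Next I would introduce the graded $k$-algebra automorphism $\phi$ of $R$ defined by $\phi(x_1)=x_1-(x_2+\cdots+x_n)$ and $\phi(x_j)=x_j$ for $j\ge 2$. This satisfies $\phi(s)=x_1$ and fixes $J$ setwise (it fixes each generator of $J$), so it descends to an automorphism of $C:=R/J$. Applying $\phi$ to $Fs^m\in J$ yields $\phi(F)\,x_1^m\in J$, i.e.\ $\phi(F)\cdot x_1^m=0$ in $C$. Since $C\cong \bigl(k[x_2,\ldots,x_n]/(x_2^{d_2},\ldots,x_n^{d_n})\bigr)[x_1]$ is a free $k[x_1]$-module, multiplication by $x_1^m$ on $C$ is injective. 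Therefore $\phi(F)=0$ in $C$, and applying $\phi^{-1}$ (which also preserves $J$) gives $F\in J\subseteq(x_1^{d_1},\ldots,x_n^{d_n})$, so $f=0$ in $A$.

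The main obstacle is spotting the right change of coordinates: the substitution must both send $s$ to a single variable \emph{and} preserve the subideal obtained by discarding $x_1^{d_1}$, which forces $x_1$ (the variable linked to the relation we are dropping) as the target. Once $\phi$ is in hand the remainder is formal, and, in sharp contrast to Theorem \ref{thm:stanley}, no hypothesis on the characteristic of $k$ enters the argument.
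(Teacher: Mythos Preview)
Your proof is correct and takes a genuinely different route from the paper's. The paper argues by a leading-term computation: lift $f$ to $F$ in the polynomial ring with all exponents below the $d_i$, pick a term of $F$ of maximal $x_1$-degree $\alpha_1$, and observe that the corresponding term $cx_1^{\alpha_1+m}x_2^{\alpha_2}\cdots x_n^{\alpha_n}$ survives in $Fs^m$; since this monomial must lie in the ideal and the exponents $\alpha_2,\ldots,\alpha_n$ are already below their bounds, one is forced into $\alpha_1+m\ge d_1$, whence $\deg f\ge d_1-m>(t-m)/2$. Your approach instead first discards $x_1^{d_1}$ on degree grounds and then applies a linear change of coordinates sending $s$ to $x_1$ while fixing $J=(x_2^{d_2},\ldots,x_n^{d_n})$, reducing the question to the trivial injectivity of multiplication by $x_1^m$ on a free $k[x_1]$-module. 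The paper's version is a one-line combinatorial argument, while yours is more structural and makes it transparent why no hypothesis on the characteristic is needed: after the change of variables the problem concerns a regular element on a free module, which is characteristic-free. As a byproduct, your argument actually gives injectivity on $A_i$ for every $i$ with $i+m<d_1$, a slightly larger range than $i\le(t-m)/2$.
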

\begin{proof}
Let $f$ be a homogeneous polynomial such that $(x_1+ \dots + x_n)^mf=0$ in $A$. Then 
\[
 (x_1+ \dots + x_n)^mf=g_1x_1^{d_1} + \dots + g_nx_n^{d_n} \text{ in } k[x_1,\ldots,x_n]. 
\]
Without loss of generality, we can assume that $d_1 = \max(d_1, \ldots, d_n)$. We can see that $g_1 \ne 0$ in the following way. Let $cx_1^{\alpha_1} \cdots x_n^{\alpha_n}$ be the term in $f$ with highest $x_1$-degree, and all $\alpha_i < d_i$. When multiplying by $(x_1+ \dots + x_n)^m$ we get the term $cx_1^{\alpha_1+m} x_2^{\alpha_2} \cdots x_n^{\alpha_n}$, which can not be cancelled by any other term. Hence $\alpha_1+m \ge d_1$, and we can conclude that 
\[\deg f \ge d_1-m>\frac{t-m}{2}.\]
\end{proof}

We have now seen conditions for the map $A_i \to A_{i+m}$ given by $a \mapsto a \cdot s^m$ to be injective for $i \le (t-m)/2$. For completeness we shall now prove that it follows that the map is surjective for larger $i$. We remark that this result is not new, it can for instance be found in \cite{reid}. The proof is based on the fact that $A_i \times A_{t-i} \to A_t \cong k$ is a perfect pairing.

\begin{proposition}\label{prop:maxrank} 
Let $A=k[x_1, \ldots, x_n]/(x_1^{d_1}, \ldots, x_n^{d_n})$ and $t=\sum_{i=1}^n(d_i-1)$, and let $s\in A$ be a form of degree $d$. The map $A_i \to A_{i+d}$ given by $f \mapsto f \cdot s$ has maximal rank for each $i$ if and only if it is injective for all $i \le (t-d)/2$. 
\end{proposition}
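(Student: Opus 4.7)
The plan is to exploit two structural facts about $A$: first, that $A$ is a Gorenstein algebra with socle $A_t$ spanned by $x_1^{d_1-1}\cdots x_n^{d_n-1}$, which yields a perfect pairing $A_i \times A_{t-i} \to A_t \cong k$ via multiplication; and second, that the Hilbert function $H$ is symmetric about $t/2$ and weakly increasing on $[0,t/2]$, as recorded in the preceding discussion. These two inputs together reduce the proposition to a direct duality argument.

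The ``only if'' direction is free: if $\cdot s: A_i \to A_{i+d}$ has maximal rank for every $i$, then in particular for $i\le (t-d)/2$ the map is injective, since I would first check that in this range $H(i)\le H(i+d)$. The latter follows by combining unimodality with symmetry: if $i+d\le t/2$ the inequality is immediate, and if $i+d>t/2$ I use $H(i+d)=H(t-i-d)$ together with $i\le t-i-d\le t/2$ to conclude $H(i)\le H(i+d)$.

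For the ``if'' direction, I would argue that under the Gorenstein pairing the transpose of the multiplication map $\cdot s: A_j \to A_{j+d}$ is exactly the multiplication map $\cdot s: A_{t-j-d} \to A_{t-j}$. A linear map between finite-dimensional vector spaces is surjective iff its transpose is injective, so surjectivity of $\cdot s$ on $A_j \to A_{j+d}$ is equivalent to injectivity of $\cdot s$ on $A_{t-j-d} \to A_{t-j}$. Given an index $j>(t-d)/2$, I set $i=t-j-d$, observe that $i<(t-d)/2$, apply the hypothesis to get injectivity at degree $i$, and conclude surjectivity at degree $j$. For $i\le (t-d)/2$ the hypothesis gives injectivity directly, and by the Hilbert function argument above this is the maximal rank condition in that range. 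Together these cases cover every degree.

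The only step that requires genuine verification is the transpose identification, i.e.\ that the bilinear form $(f,g)\mapsto $ (coefficient of $x_1^{d_1-1}\cdots x_n^{d_n-1}$ in $fg$) is non-degenerate on each $A_i\times A_{t-i}$ and that under it $\cdot s$ is self-adjoint. Non-degeneracy is transparent from the monomial basis: for any monomial $m=x_1^{a_1}\cdots x_n^{a_n}\in A_i$ with $a_j<d_j$, the complementary monomial $x_1^{d_1-1-a_1}\cdots x_n^{d_n-1-a_n}\in A_{t-i}$ pairs non-trivially with $m$ and trivially with every other basis monomial of $A_i$. Self-adjointness of $\cdot s$ is associativity of the product, since $(sf,g)$ and $(f,sg)$ both read off the coefficient of the top monomial in $sfg$. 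With this pairing in hand the proof assembles as above; I expect this perfect pairing verification to be the only substantive point, and it is well known.
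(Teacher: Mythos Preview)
Your proof is correct and follows essentially the same approach as the paper: both rely on the perfect pairing $A_i \times A_{t-i} \to A_t \cong k$ to convert injectivity of $\cdot s$ in degree $i$ into surjectivity in degree $t-i-d$, after handling the ``only if'' direction via symmetry and unimodality of the Hilbert function. The only difference is packaging---you invoke the transpose/self-adjointness formalism, while the paper unpacks the same duality by explicitly constructing, for a given $f\in A_{t-i}$, the preimage $g\in A_{t-i-d}$ as the element representing a suitable linear functional $\psi:A_{i+d}\to A_t$.
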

\begin{proof}
Suppose that the above map has maximal rank for each $i$, and let $i \le (t-d)/2$. Recall that the Hilbert function is symmetric about $t/2$ and weakly increasing up to $t/2$. If $i+d \le t/2$, then $H(i) \le H(i+d)$, and the multiplication map should be injective. Suppose $i+d \ge t/2$. Since $i \le (t-d)/2$ we have that $t-i \ge i+d$. We use that the Hilbert function is weakly decreasing after $t/2$ and get $H(i)=H(t-i) \le H(i+d)$. Hence the multiplication map is injective.  
 
Let the multiplication maps be denoted by $\cdot s : A_{i} \to A_{i+d}$. Suppose that $\cdot s : A_{i} \to A_{i+d}$ is injective, for some fixed $i$. We shall prove that $\cdot s : A_{t-i-d} \to A_{t-i}$ is surjective, which then completes the proof. Let $\{ y_1, \ldots, y_N\}$ be the monomial basis for $A_{i}$, as a vector space over $k$. Since $\cdot s : A_{i} \to A_{i+d}$ is injective, $sy_1, \ldots, sy_N$ are linearly independent, and can be extended to a basis $\{ sy_1, \ldots, sy_N, \xi_{N+1} , \ldots, \xi_{M} \}$ of $A_{i+d}$. Also, if we let $m=x_1^{d_1-1} \!\cdots x_{n}^{d_n-1}$, then $\{m\}$ is a basis for $A_t$, and $\{\frac{m}{y_1}, \ldots, \frac{m}{y_N} \}$ a basis for $A_{t-i}$. Now, take some $f \in A_{t-i}$. Notice that, for every $j=1, \ldots, N$ we have $fy_j =cm$, where $c$ is the coefficient of $\frac{m}{y_j}$ in $f$. We want to prove that there is some $g \in A_{t-i-d}$ such that $f=sg$, and we will do that by proving $fy_j=sgy_j$ for every $j$. To find this $g$ we first need to define a linear map $\psi: A_{i+d} \to A_{t}$ by $\psi(sy_j)=fy_j$, and $\psi(\xi_\ell)=0$. Let $\{z_1, \ldots, z_M \}$ be the monomial basis for $A_{i+d}$, and suppose $\psi(z_j)=\alpha_jm$. Put 
$g=\alpha_1 \frac{m}{z_1} + \dots + \alpha_M\frac{m}{z_M} \in A_{t-i-d}. $
Then $gz_j=\alpha_jm$, and hence the map $\psi$ is given by multiplication by $g$. Since $\psi(sy_j)=fy_j$, we get that $fy_j=sgy_j$ for every $j$. This proves that the map $\cdot s : A_{t-i-d} \to A_{t-i}$ is surjective.
\end{proof}

Theorem \ref{thm:stanley}, Theorem \ref{thm:stanleydi}, and Proposition \ref{prop:maxrank} can now be combined into the following theorem, which we will use to derive results on the SLP.  

\begin{theorem}\label{thm:stanleymrp}
Let $A = k[x_1,\ldots,x_n]/(x_1^{d_1}, \ldots, x_n^{d_n})$, where $k$ is a field of characteristic $p>0$, and $t = \sum_{i = 1}^n (d_i-1).$ If $\max(p,d_1, \ldots, d_n)>(t+m)/2$ for an integer $m$, then the map $A_i \to A_{i+m}$ given by 
\[ f \mapsto f \cdot (x_1 + \dots + x_n)^m \]
has maximal rank for each $i$. 
\end{theorem}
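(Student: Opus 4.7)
The plan is to combine the three preceding results in a short case analysis on which term dominates the maximum $\max(p,d_1,\ldots,d_n)$. By Proposition \ref{prop:maxrank}, it suffices to show that the multiplication map $A_i \to A_{i+m}$ by $s^m = (x_1+\cdots+x_n)^m$ is injective for every $i \le (t-m)/2$; the surjectivity for larger $i$ will then follow for free from the perfect pairing argument already packaged in Proposition \ref{prop:maxrank}.

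So I would first suppose that $p = \max(p,d_1,\ldots,d_n)$. Then the hypothesis $p > (t+m)/2$ rearranges to $m + t < 2p$, which is precisely the condition required by Theorem \ref{thm:stanley}. Applying that theorem gives injectivity of $\cdot s^m: A_i \to A_{i+m}$ for all $i \le (t-m)/2$.

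Next I would handle the remaining case, where $d_j = \max(d_1,\ldots,d_n) \ge p$ and $d_j > (t+m)/2$. Here the hypothesis is exactly the assumption of Theorem \ref{thm:stanleydi} (which has no restriction on the characteristic of $k$), and that theorem again delivers injectivity of $\cdot s^m: A_i \to A_{i+m}$ for all $i \le (t-m)/2$.

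In either case the injectivity hypothesis of Proposition \ref{prop:maxrank} is met for the form $s^m$ of degree $m$, so the proposition yields maximal rank in every degree, which is what we wanted. I do not expect any real obstacle; the only thing to be careful about is translating $p > (t+m)/2$ into the inequality $m + t < 2p$ that Theorem \ref{thm:stanley} is stated with, and making sure the case split genuinely exhausts the hypothesis $\max(p,d_1,\ldots,d_n) > (t+m)/2$.
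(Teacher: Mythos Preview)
Your proposal is correct and is exactly what the paper does: it states the theorem as an immediate combination of Theorem \ref{thm:stanley}, Theorem \ref{thm:stanleydi}, and Proposition \ref{prop:maxrank}, without writing out the proof. Your case split on whether $p$ or some $d_j$ realizes the maximum, together with the reduction to injectivity via Proposition \ref{prop:maxrank}, is precisely the intended argument.
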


\begin{remark}
We were noticed about the fact that we are not the first to use the result in \cite{reid} in the positive characteristic case --- 
Vraciu \cite{vraciu} has used similar techniques in order to obtain results on the minimal degree of a non-trivial zero-divisor in $k[x_1, \ldots, x_n]/(x_1^{d_1}, \ldots, x_{n-1}^{d_{n-1}}, (x_1+\cdots + x_{n-1})^{d_n})$ under certain assumptions on the $d_i$'s.
\end{remark}

\section{The strong Lefschetz property} \label{sec:slp}

\begin{definition} Let $A$ be a graded artinian algebra. We say that $A$ has the \emph{strong Lefschetz property} (SLP) if there is  a linear form $\ell$ in $A$ such that the map $A_i \to A_{i+m},$ given by $a \mapsto \ell^m \cdot a$, has maximal rank for all $i$ and all $m\geq 1$. In this case, $\ell$ is said to be a \emph{strong Lefschetz element}.
\end{definition}

Let $A=k[x_1, \ldots, x_n]/(x_1^{d_1}, \ldots , x_n^{d_n})$ with $k$ of characteristic $p>0$ and $t=\sum_{i=1}^n(d_i-1)$. It can easily be seen that when $n=1$, we have the SLP regardless of the characteristic. When $n=2$ the situation is more involved, as is indicated in Theorem 4.9 in \cite{cook}. We aim to give a complete characterization of when $A$ has the SLP, for $n \ge 3$, and leave the two variable case out of this paper.

When $p>t$, obviously $p>(t+m)/2$ for all $m \le t$. Then we can use Theorem \ref{thm:stanleymrp} to conclude that $x_1+\dots +x_n$ is a strong Lefschetz element in $A$. For $m>t$, any map $A_i \to A_{i+m}=0$ has maximal rank, and there is nothing to prove. Hence we have the SLP when $p>t$. This result was also proved by Cook II in \cite[Theorem 3.6]{cook}, and earlier for two variables by Lindsey in \cite[Lemma 5.2]{lind}. 

In \cite{cook} it is also given a complete characterization of when $A$ has the SLP, for $k$ infinite  of characteristic 2, and for $k$ infinite of positive characteristic and $d_1= \dots = d_n$. It is conjectured, \cite[Conjecture 7.6]{cook}, that $A$, where $\max(d_1, \ldots, d_n) \le (t+1)/2$, has the SLP if and only if the characteristic of $k$ is zero or greater than $t$. As we have seen, the case of characteristic zero was proved by Stanley in \cite{stanley}. Notice that the condition $\max(d_1, \ldots, d_n) \le (t+1)/2$ implies $n \ge 3$. The classification that we obtain in Theorem \ref{thm:slp} will settle the conjecture as a special case.

\subsection{Necessary conditions for the SLP}

The key for the necessary condition for $A$ having the SLP is the following two lemmas.

\begin{lemma} \label{lemma:largepower}
Let $A = k[x_1,\ldots,x_n]/(x_1^{d_1}, \ldots, x_n^{d_n})$, where $k$ is a field of characteristic $p>0$. 
Write $d_i = N_i p + r_i$, with $0 < r_i \le p$. Let $N = \sum_{i=1}^n N_i$.  

Let $0\le j \le n$ and $m=N-j+1$. Then
$$\ell^{mp} \cdot x_1^{r_1} x_2^{r_2} \cdots x_j^{r_j} = 0$$ 
in $A$, for any linear form $\ell$. The monomial $x_1^{r_1} \cdots x_j^{r_j}$ is interpreted as $1$ when $j=0$.
\end{lemma}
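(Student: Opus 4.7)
The plan is to reduce everything to Frobenius plus a counting argument. Since $\operatorname{char} k = p$, write $\ell = c_1 x_1 + \cdots + c_n x_n$ and use the Frobenius identity
\[
\ell^{mp} \;=\; (\ell^p)^m \;=\; \bigl(c_1^p x_1^p + c_2^p x_2^p + \cdots + c_n^p x_n^p\bigr)^m.
\]
Expanding the right-hand side by the multinomial theorem gives
\[
\ell^{mp} \;=\; \sum_{a_1+\cdots+a_n = m} \binom{m}{a_1,\ldots,a_n}\, c_1^{pa_1}\cdots c_n^{pa_n}\, x_1^{pa_1}\cdots x_n^{pa_n}.
\]
Multiplying this by $x_1^{r_1}\cdots x_j^{r_j}$ turns each term into a scalar multiple of the monomial $x_1^{pa_1+r_1}\cdots x_j^{pa_j+r_j} x_{j+1}^{pa_{j+1}}\cdots x_n^{pa_n}$, with the convention that no $r_i$ is added in the slots $i > j$.

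Next I would check, exponent by exponent, when such a monomial survives in $A = k[x_1,\ldots,x_n]/(x_1^{d_1},\ldots,x_n^{d_n})$. For $i \le j$, non-vanishing requires $pa_i + r_i < d_i = N_i p + r_i$, i.e.\ $a_i \le N_i - 1$. For $i > j$, non-vanishing requires $pa_i < d_i = N_i p + r_i$; since $1 \le r_i \le p$, this is exactly $a_i \le N_i$. Summing these constraints gives
\[
\sum_{i=1}^n a_i \;\le\; \sum_{i \le j}(N_i-1) + \sum_{i>j} N_i \;=\; N - j.
\]
But every monomial in the expansion satisfies $\sum a_i = m = N - j + 1 > N - j$, so \emph{no} term survives. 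Hence the entire expansion vanishes in $A$, as desired.

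There is essentially no genuine obstacle here; the only subtle point is handling the boundary case $r_i = p$ (so that $d_i = (N_i+1)p$) in the bound $a_i \le N_i$ for indices $i > j$, but the inequality $pa_i < d_i$ still forces $a_i \le N_i$ because $pa_i = (N_i+1)p$ already equals $d_i$. The cases $j=0$ (product by $1$, giving $\ell^{(N+1)p}=0$) and $j=n$ (product by the full $x_1^{r_1}\cdots x_n^{r_n}$) are both captured uniformly by the same count. The argument is independent of the coefficients $c_i$, which is consistent with the statement holding for an arbitrary linear form $\ell$.
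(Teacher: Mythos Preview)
Your proof is correct and follows essentially the same approach as the paper: apply Frobenius to write $\ell^{mp}=(c_1^p x_1^p+\cdots+c_n^p x_n^p)^m$, expand, and show by a pigeonhole/counting argument that every monomial in the product with $x_1^{r_1}\cdots x_j^{r_j}$ has some exponent at least $d_i$. The paper splits into the two cases $m>N$ and $m\le N$, whereas you handle both at once via the single inequality $\sum a_i \le N-j < m$; this is a minor streamlining, not a different method.
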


\begin{proof}
Let $\ell=c_1x_1+ \dots +c_nx_n$, with $c_i \in k$ for $i = 1, \ldots, n$. Since $k$ is a field of characteristic $p$, it holds that
\[
 (c_1x_1+ \dots +c_nx_n)^{mp}=(c_1^p x_1^p+ \dots +c_n^p x_n^p)^{m} = \sum_{\alpha} \hat{c}_{\alpha} x_1^{\alpha_1 p} \cdots x_n^{\alpha_n p},
\]
where the sum goes over all $n$-tuples $\alpha=(\alpha_1, \ldots, \alpha_n)$ such that $\sum_{i=1}^n\alpha_i=m$. If $m>N$, then at least one $\alpha_i > N_i$, in every $n$-tuple $\alpha$. Then $\alpha_ip \ge (N_i+1)p \ge d_i$, and hence $(c_1x_1+ \dots +c_nx_n)^{mp}=0$. 

Assume now that $m \le N$. We need only to include $n$-tuples $\alpha=(\alpha_1, \ldots, \alpha_n)$, where each $\alpha_i \le N_i$, because only these give non-zero terms in the sum. But then we can find at least one $\alpha_i=N_i$ among $\alpha_1, \ldots, \alpha_j$, in each $n$-tuple $\alpha$, because otherwise $\sum_{i=1}^n \alpha_i \le N-j=m-1$. Thus $x_i^{\alpha_ip}\cdot x_i^{r_i}=x_i^{d_i}=0$ and
\[
 (c_1 x_1+ \dots +c_n x_n)^{mp}=\left(\sum_{\alpha} \hat{c}_{\alpha} x_1^{\alpha_1 p} \cdots x_n^{\alpha_n p}\right)\cdot x_1^{r_1} \cdots x_j^{r_j}=0
\]
\end{proof}

\begin{example}
 Let $A=k[x_1,x_2,x_3]/(x_1^{12},x_2^9,x_3^3)$ with $k$ of characteristic $5$. Then $d_1=2\cdot 5 +2$, $d_2=5+4$,  $d_3=3$, and $N=3$. Let $j=1$ in Lemma \ref{lemma:largepower}. Then $m=3$, and $\ell^{3 \cdot 5}x_1^2=0$, for any linear form $\ell$. We have $t=11+8+2=21$, and $\deg(x_2^2) \le (21-3\cdot 5)/2$. It follows from Proposition \ref{prop:maxrank} that multiplication by $\ell^{15}$ can not have maximal rank, and $A$ does not have the SLP. 
 
 We could also choose $j=2$ in Lemma \ref{lemma:largepower}. Then we get that $\ell^{2 \cdot 5}x_1^2x_2^4=0$. This is not a proof that $A$ fails to have the SLP, because $\deg(x_1^2x_2^4) \not \le (21-2 \cdot 5)/2$. 
 
 If we choose $j=3$, Lemma \ref{lemma:largepower} says that $\ell^5 x_1^2x_2^4x_3^3=0$. But this is trivial, since $x_1^2x_2^4x_3^3=0$.
\end{example}

As we saw in the example above, Lemma \ref{lemma:largepower} can be used to prove that an algebra fails to have the SLP, but we need to make sure that the monomial $x_1^{r_1} \cdots x_j^{r_j}$ is nonzero, and of degree small enough. We collect the details of this in the next lemma. Notice also that the indices $1, 2 , \dots, j$ can be replaced by any suitable choice of $j$ indices.

\begin{lemma}\label{lemma:notslp}
Let $A = k[x_1,\ldots,x_n]/(x_1^{d_1}, \ldots, x_n^{d_n})$, where $k$ is a field of characteristic $p>0$. Write $d_i = N_i p + r_i$, with $0 < r_i \le p$. Let $\Lambda$ be an index set such that $d_i>r_i$ for all $i \in \Lambda$, and let $m=\sum_{i=1}^nN_i-|\Lambda|+1$. If 
\[\sum_{i \in \Lambda}r_i \le \frac{\sum_{i=1}^n(d_i-1)-mp}{2}, \]
or equivalently 
\[
 \sum_{i \in \Lambda}r_i \le \sum_{i \notin \Lambda} r_i -n+(|\Lambda|-1)p,
\]
then $A$ fails to have the SLP. 
\end{lemma}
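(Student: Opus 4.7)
The plan is to combine Lemma \ref{lemma:largepower} (in the form it takes after reindexing, as pointed out in the comment preceding the statement) with the criterion of Proposition \ref{prop:maxrank}. Concretely, I would produce a single monomial $M = \prod_{i \in \Lambda} x_i^{r_i}$ which is nonzero in $A$, lives in a degree at most $(t - mp)/2$, and is annihilated by $\ell^{mp}$ for every linear form $\ell$. This directly falsifies the injectivity demanded by Proposition \ref{prop:maxrank} for the map $\cdot\,\ell^{mp}\colon A_i \to A_{i+mp}$, ruling out the SLP.

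First I would apply Lemma \ref{lemma:largepower} with $j = |\Lambda|$ and the roles of the variables permuted so that the ``first $j$'' indices are exactly those in $\Lambda$. This yields $\ell^{mp} \cdot M = 0$ in $A$ for every linear $\ell$, with $m = \sum_i N_i - |\Lambda| + 1$, matching the $m$ of the statement. Next I would check that $M \ne 0$ in $A$: the hypothesis $d_i > r_i$ for $i \in \Lambda$ (equivalently $N_i \geq 1$) means each exponent $r_i$ is strictly below the corresponding $d_i$, so $M$ is a nonzero element of $A_{\deg M}$ with $\deg M = \sum_{i \in \Lambda} r_i$.

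Then I would invoke Proposition \ref{prop:maxrank}: if multiplication by $\ell^{mp}$ had maximal rank in every degree, it would in particular be injective on $A_i$ for every $i \le (t - mp)/2$. But by hypothesis $\deg M = \sum_{i \in \Lambda} r_i \le (t - mp)/2$, and $M$ lies in the kernel of multiplication by $\ell^{mp}$. Since this holds for \emph{every} linear form $\ell$, no linear form is a strong Lefschetz element, so $A$ fails the SLP.

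Finally I would verify that the two displayed inequalities are equivalent, by the routine substitution $\sum_{i=1}^n(d_i - 1) = pN + \sum_{i=1}^n r_i - n$ (where $N = \sum_i N_i$) and $mp = (N - |\Lambda| + 1)p$; rearranging $2 \sum_{i \in \Lambda} r_i \le \sum_{i=1}^n (d_i - 1) - mp$ then gives $\sum_{i \in \Lambda} r_i \le \sum_{i \notin \Lambda} r_i - n + (|\Lambda| - 1) p$. There is no genuine obstacle here; the whole lemma is bookkeeping that packages the obstruction already produced by Lemma \ref{lemma:largepower} into a clean numerical criterion, and the only thing to be careful about is ensuring that $M$ is nonzero, which is precisely what the assumption $d_i > r_i$ on $\Lambda$ guarantees.
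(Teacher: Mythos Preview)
Your proposal is correct and follows essentially the same approach as the paper: produce the nonzero monomial $\prod_{i\in\Lambda} x_i^{r_i}$, apply Lemma~\ref{lemma:largepower} (after reindexing) to see it is killed by $\ell^{mp}$ for every linear $\ell$, and then use Proposition~\ref{prop:maxrank} together with the degree hypothesis to rule out any strong Lefschetz element. You additionally spell out the equivalence of the two displayed inequalities, which the paper states without proof.
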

\begin{proof}
Let $\Lambda$ be an index set as above. Since $r_i <d_i$ for all $i \in \Lambda$, and $f=\prod_{i \in \Lambda} x_i^{r_i} \neq 0$. By Lemma \ref{lemma:largepower} we have $\ell^{mp}f=0$ for any nonzero linear form $\ell$. Then the map $A_{d} \to A_{d+mp}$, given by multiplication by $\ell^{mp}$, is not injective for $d=\sum_{i \in \Lambda}r_i$. If
\[\sum_{i \in \Lambda}r_i \le \frac{\sum_{i=1}^n(d_i-1)-mp}{2}, \]
if follows from Proposition \ref{prop:maxrank} that we can not find a strong Lefschetz element.
\end{proof}

From now on, we will assume that $n \ge 3$ in $A = k[x_1,\ldots,x_n]/(x_1^{d_1}, \ldots, x_n^{d_n})$. Thus we should also assume $d_i \ge 2$ for all $i$. Otherwise $A$ might be isomorphic to a ring with only one or two variables.

\begin{proposition} \label{prop:slpnec}
Let $A = k[x_1,\ldots,x_n]/(x_1^{d_1}, \ldots, x_n^{d_n})$, where $n \geq 3$, $d_1 \ge d_2 \ge \dots \ge d_n \ge 2$, and where $k$ is a field of characteristic $p > 0$. Write $d_i = N_i p + r_i$. If one of the conditions below holds, then $A$ fails to have the SLP.

\begin{enumerate}
\item $p = 2$,
\item $p \geq 3$, $d_1 > p, d_2 \leq p$ and $\sum_{i=2}^n (d_i - 1) > r_1$,
\item $p \geq 3$, $d_1 > p, d_2 \leq p$ and $r_1 + \sum_{i=2}^n(d_i-1) > p$,
\item $p \geq 3$, $d_1 \leq p$ and $\sum_{i=1}^n(d_i-1) \geq p$, 
\item $p \geq 3$ and $d_2 > p$.
\end{enumerate}
\end{proposition}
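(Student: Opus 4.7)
The plan is to verify each case by applying Lemma \ref{lemma:notslp} with a carefully chosen index set $\Lambda$. Cases 2, 3, and 4 are direct, while cases 1 and 5 require some subcase analysis.

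For case 2, I would take $\Lambda = \{1\}$, which is valid because $d_1 > p$ ensures $d_1 > r_1$, and yields $m = N_1$ since $N_i = 0$ for $i \ge 2$. Unpacking $t - N_1 p = r_1 - 1 + \sum_{i \ge 2}(d_i - 1)$, the inequality of Lemma \ref{lemma:notslp} rearranges exactly to $\sum_{i \ge 2}(d_i - 1) > r_1$. For case 3, take $\Lambda = \emptyset$, so $m = N_1 + 1$; a similar computation shows the inequality is equivalent to $r_1 + \sum_{i \ge 2}(d_i - 1) > p$. For case 4, where every $d_i \le p$ and so $N = 0$, again $\Lambda = \emptyset$ with $m = 1$ reduces the lemma's condition to the hypothesis $t \ge p$.

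For case 5 ($p \ge 3$ and $d_2 > p$, so $|\{i : d_i > p\}| \ge 2$), I would try in turn $\Lambda = \emptyset$, $\Lambda = \{1, 2\}$, and $\Lambda$ a pair of indices from $\{i : d_i > p\}$ chosen with the smallest $r_i$'s. A split by whether $\sum r_i \ge n + p$ or not, combined with a comparison of $r_1 + r_2$ with $\sum_{i \ge 3} r_i + p - n$, shows that at least one of these three candidate choices always satisfies the lemma's inequality; the slack $(|\Lambda|-1)p$ on the right-hand side is what makes enlarging $\Lambda$ beyond $\{1,2\}$ useful when $r_1, r_2$ are as large as $p$.

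Case 1 ($p = 2$) is the most delicate, since Lemma \ref{lemma:notslp} in its stated form does not always suffice (witnessed by $n = 3$ with all $d_i = 3$, where the power $\ell^{(N+1)p} = \ell^{8}$ supplied by Lemma \ref{lemma:largepower} arrives too late, but $\ell^4 = 0$ already rules out SLP). The extra ingredient is the iterated Frobenius identity: in characteristic 2 one has $\ell^{2^s} = \sum c_i^{2^s} x_i^{2^s}$, which vanishes in $A$ as soon as $2^s \ge \max(d_i)$. Choosing the minimal such $s$, multiplication by $\ell^{2^s}$ is the zero map on $A_0$, which fails injectivity whenever $2^s \le t$; in the remaining patterns where $2^s > t$, Lemma \ref{lemma:notslp} already applies with $\Lambda = \{1\}$ or $\Lambda = \emptyset$. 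I expect the main obstacle of the proof to be this case 1 analysis, both in covering every pattern of $(d_1, \ldots, d_n)$ and in giving the Frobenius-iterated version of Lemma \ref{lemma:largepower} the required amount of formality.
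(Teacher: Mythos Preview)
Your cases 2, 3, and 4 match the paper's argument exactly: the same choices $\Lambda=\{1\}$, $\Lambda=\emptyset$, $\Lambda=\emptyset$ are used, and the inequalities reduce as you describe.

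For case 5, however, your three candidate index sets are not enough. Take $n=3$, $p=7$, $d_1=11$, $d_2=10$, $d_3=2$, so $r_1=4$, $r_2=3$, $r_3=2$ and $d_2>p$. With $\Lambda=\emptyset$ the inequality of Lemma~\ref{lemma:notslp} reads $0\le 9-3-7=-1$, which fails; with $\Lambda=\{1,2\}$ it reads $7\le 2-3+7=6$, which also fails; and since $\{1,2\}$ is the only pair inside $\{i:d_i>p\}$, your third option coincides with the second. What does work here is the \emph{singleton} $\Lambda=\{2\}$ (the index with the smaller $r_i$ among $1,2$): then $3\le 4+2-3=3$. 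The paper's argument for case 5 is organised around exactly this: after arranging $r_1\le r_2$, it takes $\Lambda=\{1\}$ as the primary choice, which handles everything except the pattern $n=3$, $r_1=r_2$, $r_3=2$; that residual case is then finished with $\Lambda=\{1,2\}$ and $\Lambda=\emptyset$, using that $r_1+r_2=p$ is impossible for $p$ odd. A separate short argument with $\Lambda=\{i:r_i=1\}$ covers the situation where some $r_i=1$. So you need to add singleton $\Lambda$'s to your toolkit.

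For case 1 your plan diverges from the paper entirely. The paper does not attempt a direct argument: it simply invokes Cook's classification in characteristic~2 over an infinite field (\cite[Corollary 6.3]{cook}) and then observes that SLP passes down from $A\otimes_k k'$ to $A$. Your Frobenius-iteration idea (use $\ell^{2^s}=0$ for the least $s$ with $2^s\ge d_1$, and fall back on Lemma~\ref{lemma:notslp} when $2^s>t$) is a reasonable self-contained alternative, but as you yourself note it needs a careful case split, and you have not actually carried it out; given the citation available, the paper's route is both shorter and safer.
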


\begin{proof}

\begin{enumerate}
\item If $k$ is an infinite field, then  $A$ fails to have the SLP \cite[Corollary 6.3]{cook}. Suppose that $k$ is finite. Let $k'$ be an infinite field such that $k \subset k'$. If $A$ has the SLP, so does $k'[x_1,\ldots,x_n]/(x_1^{d_1}, \ldots, x_n^{d_n})$. Thus $A$ fails to have the SLP. 

\item Notice that $d_i = r_i$ for $i = 2, \ldots, n$. Let $\Lambda = \{1\}$. Then 
\[\sum_{i \notin \Lambda} r_i -n+(|\Lambda|-1)p = \sum_{i=2}^n d_i-n= \sum_{i=2}^n (d_i - 1) - 1 \ge r_1,\] so by Lemma \ref{lemma:notslp}, $A$ fails to have the SLP. 

\item Let $\Lambda = \{ \}$. Then 
\[\sum_{i \notin \Lambda} r_i - n + (|\Lambda|-1)p = r_1 +\sum_{i=2}^n (d_i-1) -1 -p \geq p+1 - 1 - p = 0,\]
so by Lemma \ref{lemma:notslp}, $A$ fails to have the SLP. 
\item Let $\Lambda = \{ \}$. Then 
\[\sum_{i \notin \Lambda} r_i - n + (|\Lambda|-1)p = \sum_{i=1}^n (r_i-1) - p=\sum_{i=1}^n (d_i-1) - p \geq 0,\] so by Lemma \ref{lemma:notslp}, $A$ fails to have the SLP. 
\item 

Let us first assume that $r_i \ge 2$ for all $i$. We will not use the order between $d_1$ and $d_2$, so we may assume that $r_1 \le r_2$. By taking $\Lambda=\{1\}$ in Lemma \ref{lemma:notslp} we get that $A$ fails to have the SLP when the inequality
\begin{equation} \label{lambd1}
r_1 \le r_2+ \dots + r_n-n 
\end{equation} 
holds. The inequality holds in all cases, except when $n=3$, $r_1=r_2$, and $r_3=2$. When taking $\Lambda=\{1,2\}$ in Lemma \ref{lemma:notslp} we get the inequality
\begin{equation}\label{lambd12}
r_1+r_2 \le r_3+ \dots + r_n-n+p.
\end{equation}
In the case when $n=3$, $r_1=r_2$, and $r_3=2$, this becomes $r_1+r_2\le p-1$. Thus $A$ fails to have the SLP when $r_1+r_2\le p-1$. 

Let $\Lambda = \{ \}$. Then $\sum_{i \notin \Lambda} r_i - n + (|\Lambda|-1)p = \sum_{i=1}^nr_i - p$, so if $\sum_{i=1}^nr_i \geq p$, then $A$ fails to have the SLP by Lemma \ref{lemma:notslp}. In our case, this means that  $A$ fails to have the SLP if  $r_1 + r_2 + r_3 - 3 \geq p \Leftrightarrow r_1 + r_2 \geq p + 1$. The only case that is not covered here is when $r_1+r_2=p$. However, this can not happen when $r_1=r_2$ and $p$ is an odd prime. 

Next, assume that there is at least one $r_i=1$. Let $\Lambda=\{i ~| r_i=1\}$. Notice that $r_i=1$ is only allowed when $N_i>0$, otherwise we get $d_i=1$. Thus this is an appropriate choice of the set $\Lambda$, in Lemma \ref{lemma:notslp}. In this case $\sum_{i \in \Lambda}r_i=|\Lambda|$, and the inequality from Lemma \ref{lemma:notslp} becomes
\[|\Lambda| \le \sum_{i \notin \Lambda} r_i -n+(|\Lambda|-1)p.\]
Since $\sum_{i \notin \Lambda} r_i \ge 2(n-|\Lambda|)$ the above inequality is true if
\[
 |\Lambda| \le 2(n-|\Lambda|) -n+(|\Lambda|-1)p.
\]
This can be rewritten as $|\Lambda|(p-3)+n-p\ge0$. We know that $|\Lambda| \ge 1$ and $p-3 \ge 0$, so $|\Lambda|(p-3)+n-p\ge p-3+n-p=n-3 \ge 0$ is true. This shows that we do not have the SLP.

\end{enumerate}
\end{proof}

\subsection{Sufficient conditions for the SLP}
We now turn into the sufficient conditions for the SLP.

\begin{proposition} \label{prop:slpcase1}
Let $A = k[x_1,\ldots,x_n]/(x_1^{d_1}, \ldots, x_n^{d_n})$, where $k$ is a field of characteristic $p>0$. Suppose $d_1,\ldots,d_n \geq 2$. Let $t=\sum_{i=1}^n(d_i-1)$. Then $A$ has the SLP if $t < p$.

\end{proposition}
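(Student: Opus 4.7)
The plan is to verify directly that $\ell = x_1 + \cdots + x_n$ is a strong Lefschetz element, by invoking Theorem \ref{thm:stanleymrp} on a case split in the exponent $m$.

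First, for $1 \le m \le t$, the hypothesis $t < p$ gives
\[
\frac{t+m}{2} \le \frac{t+t}{2} = t < p,
\]
so in particular $\max(p, d_1, \ldots, d_n) \ge p > (t+m)/2$. Theorem \ref{thm:stanleymrp} then applies and tells us that the multiplication map
\[
A_i \longrightarrow A_{i+m}, \qquad f \mapsto f\cdot \ell^m
\]
has maximal rank for every $i$.

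Second, for $m > t$, we have $i + m > t$ for every $i \ge 0$, so $A_{i+m} = 0$. The multiplication map is then the zero map into the zero space, which is surjective, hence of maximal rank.

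Combining the two ranges covers every $m \ge 1$, so $\ell$ is a strong Lefschetz element and $A$ has the SLP. The main obstacle is essentially absent here: Theorem \ref{thm:stanleymrp} has already been packaged so that the uniform bound $p > t$ makes the hypothesis hold for all relevant $m$, and the tail range $m > t$ is degenerate.
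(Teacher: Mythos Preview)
Your proof is correct and follows essentially the same approach as the paper: both split into the cases $m\le t$ (where the bound $(t+m)/2\le t<p$ allows Theorem~\ref{thm:stanleymrp} to apply) and $m>t$ (where the target space is zero). The only difference is cosmetic phrasing.
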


\begin{proof}
We want to show that there is a linear form $s$ such that multiplication by $s^m$ has maximal rank in every degree. If $m>t$ there is nothing to prove, so assume $m \le t$. Then $t+m \le 2t <2p$, and by Theorem \ref{thm:stanleymrp}, the linear form $x_1+ \dots +x_n$ is a strong Lefschetz element.  
\end{proof}

There is one more case when $A$ has the SLP, as we will see in the next proposition.  

\begin{proposition}\label{prop:slpcase2}
Let $A = k[x_1,\ldots,x_n]/(x_1^{d_1}, \ldots, x_n^{d_n})$, where $k$ is a field of characteristic $p>0$. Let $d_1=\max(d_1, \dots, d_n)$, and write $d_1=N_1p+r_1$ where $0<r_1 \le p$. Then $A$ has the SLP if all the following conditions are satisfied
\begin{enumerate}
\item $d_1 > p$ and $d_i \le p$ for $i=2, \dots , n$,
\item $\displaystyle r_1+ \sum_{i=2}^n (d_i-1)\le p$,
\item $\displaystyle \sum_{i=2}^n(d_i-1) \le r_1$.
\end{enumerate}
\end{proposition}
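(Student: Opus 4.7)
The plan is to show that $\ell = x_1 + \cdots + x_n$ is a strong Lefschetz element for $A$. Write $T = \sum_{i=2}^n (d_i-1)$, so that the socle degree is $t = (d_1-1) + T$, and consider the multiplication map $\cdot \ell^m : A_i \to A_{i+m}$ for varying $m$.

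For $m > t$ the target vanishes, and for $m \leq d_1 - T$ the bound $d_1 = \max(d_1,\ldots,d_n) > (t+m)/2$ is equivalent to $m \leq d_1 - T$, so Theorem \ref{thm:stanleymrp} gives maximal rank. The substantive range is $d_1 - T < m \leq t$, and here I would exploit the Frobenius. Condition (1) forces $x_i^p = 0$ in $A$ for $i \geq 2$, so in $A$
\[ \ell^p = x_1^p + \cdots + x_n^p = x_1^p. \]
A short bookkeeping using $T \leq r_1$ (condition (3)) and $r_1 + T \leq p$ (condition (2)) shows that in this range the base-$p$ expansion $m = qp + s$ must have $q = N_1$ and $1 \leq s \leq r_1 + T - 1 < p$, so $\ell^m = x_1^{N_1 p}\,\ell^s$.

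Next I would use that $\operatorname{Ann}_A(x_1^{N_1 p}) = (x_1^{r_1})$ (since $r_1 = d_1 - N_1 p$). Hence $\ell^m f = 0$ in $A$ is equivalent to $\ell^s \bar f = 0$ in the quotient
\[ A'' = A/(x_1^{r_1}) = k[x_1,\ldots,x_n]/(x_1^{r_1},\, x_2^{d_2},\ldots,x_n^{d_n}), \]
where $\bar f$ denotes the image of $f$. By condition (2) the socle degree of $A''$ is $r_1 + T - 1 \leq p - 1$, so Proposition \ref{prop:slpcase1} applies: $\ell$ is a strong Lefschetz element of $A''$, and in particular $\cdot \ell^s : A''_i \to A''_{i+s}$ is injective for $i \leq (r_1+T-1-s)/2 = (t-m)/2$. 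Therefore $\bar f = 0$ in $A''_i$, i.e.\ $f \in (x_1^{r_1}) \cap A_i$.

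To finish, condition (3) ensures $(t-m)/2 \leq (r_1+T-1)/2 < r_1$, so $i < r_1$ and $(x_1^{r_1}) \cap A_i = 0$, forcing $f = 0$. Injectivity of $\cdot \ell^m$ for $i \leq (t-m)/2$ then upgrades to maximal rank in every degree by Proposition \ref{prop:maxrank}. The main obstacle is not a single hard identity but weaving the three hypotheses into their separate roles: (1) produces $\ell^p = x_1^p$; (2) keeps the socle of $A''$ below $p$ so that Proposition \ref{prop:slpcase1} is available; and (3) makes the lift from $A''$ back to $A$ free of obstruction. The only real verification is checking that the numerical bounds $d_1 - T < m \leq t$, $s \leq r_1 + T - 1$, and $i < r_1$ align properly.
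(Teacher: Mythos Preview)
Your argument is correct and takes a genuinely different route from the paper. The paper proceeds by induction on $\deg f(1,x_2,\ldots,x_n)$: the base case analyses $s^m x_1^a = 0$ directly, splitting according to how $r = m \bmod p$ compares with $\sum_{i\ge 2}(r_i-1)$, and the inductive step invokes the derivative lemma (Lemma~\ref{lemma:deriv}) to pass from $f$ to $F'_{x_j}$ in a ring with one exponent lowered by~$1$. Your proof instead uses the Frobenius identity $\ell^p = x_1^p$ to factor $\ell^m = x_1^{N_1 p}\ell^s$ in one stroke, then passes to the quotient $A'' = A/(x_1^{r_1})$ and invokes Proposition~\ref{prop:slpcase1} on that smaller ring, lifting back via $(x_1^{r_1})\cap A_i = 0$ for $i<r_1$.

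Your approach is cleaner and more modular: it avoids the derivative machinery entirely, and it makes the roles of the three hypotheses very transparent (exactly as you summarise at the end). The paper's approach has the virtue of being parallel to the proof of Theorem~\ref{thm:stanley}, reusing the same induction scheme, but it requires a longer case analysis in the base step. One small remark: Proposition~\ref{prop:slpcase1} is stated with the standing hypothesis $d_i\ge 2$, which your $A''$ need not satisfy when $r_1=1$; this is harmless, since the proof of that proposition only uses Theorem~\ref{thm:stanleymrp} with $t''<p$, but it would be tidier to cite Theorem~\ref{thm:stanleymrp} directly for $A''$.
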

The proof of this proposition uses the same technique as the proof of Theorem \ref{thm:stanley}. Notice that, with the notation from Lemma \ref{lemma:notslp}, $d_i=r_i$ here, for $i=2, \ldots, n$. 
\begin{proof}
Let $t=\sum_{i=1}^n(d_i-1)$, and $s=x_1 + \dots + x_n$. We want to prove that $s$ is a strong Lefschetz element, that is, for any positive integer $m$ the map $A_i \to A_{i+m}$, given by multiplication by $s^m$, has maximal rank. By Proposition \ref{prop:maxrank} it is enough to show that it is injective for $i \le (t-m)/2$. 
Suppose that there is a nonzero homogeneous polynomial $f$ such that $s^mf=0$. We want to show that $\deg f >(t-m)/2$. The proof is by induction on the degree of $f(1,x_2, \ldots, x_n)$. 

Suppose first that $\deg(f(1,x_2, \ldots, x_n))=0$. Then $f$ is a monomial in the variable $x_1$ only, so $f= c x_1^a$, and without loss of generality we may assume that $c=1$. We have $s^mx_1^a=0$. Let $m=Np+r$, where $0 \le r <p$. Then 
\[
 s^mx_1^a=s^{Np}s^rx_1^a=(x_1^p+ \dots + x_n^p)^Ns^rx_1^a = x_1^{Np}s^rx_1^a
\]
by the first condition. Hence $x_1^{Np}s^rx_1^a=0$. Note that, when expanding $s^r$, we get nonzero coefficients because $r<p$. 

In the case when $r \le \sum_{i=2}^n(r_i-1)$, we can find a nonzero term in the expansion of $s^r$ which does not contain the variable $x_1$. Then, for $s^mx_1^a$ to be zero, we must have $Np+a \ge d_1=N_1p+r_1$. We rewrite this as $a \ge (N_1-N)p+r_1$. We want to show that $a >(t-m)/2$, which follows if $(N_1-N)p+r_1>(t-m)/2$. Since
\begin{align*}
 t-m=&\sum_{i=1}^n(d_i-1)-m \\ 
 =& N_1p+r_1 -1 + \sum_{i=2}^n(r_i-1)-(Np+r)=(N_1-N)p+ \sum_{i=1}^n(r_i-1)-r
\end{align*}
the inequality that we want to prove becomes 
\[
 2((N_1-N)p+r_1) > (N_1-N)p+ \sum_{i=1}^n(r_i-1)-r 
 \]
 or equivalently
 \[
 (N_1-N)p+r_1 > \sum_{i=2}^n r_i -n -r.
\]
This is true when $N_1 \ge N$, by our third condition. If $N_1<N$ then 
\[
 m \ge Np \ge N_1p+p>N_1p+ \sum_{i=1}^n(r_i-1)=t
\]
by our second condition. Then $A_{i+m}=0$ for any $i$, and there is nothing to prove. 

In the other case, when $r > \sum_{i=2}^n(r_i-1)$, the term of lowest $x_1$-degree in $x_1^{Np}s^rx_1^a$ has $x_1$-degree 
\[
 r-\sum_{i=2}^n(r_i-1)+Np+a \ge d_1.
\]
Then
\[
 a \ge d_1+\sum_{i=2}^n(r_i-1)-Np-r=t+1-m >\frac{t-m}{2}.
\]
This finishes the induction base. 

For the induction step, assume that $\deg (f(1, x_2, \ldots, x_n))>0$, and that the statement holds for homogeneous polynomials of lower degree. Then $f$ contains at least one variable $x_j$, where $j\neq 1$ and $d_j>1$. We know that the $x_j$-degree of $f$ is less than $d_j \le p$. Let $F$ be a homogeneous polynomial in $k[x_1, \ldots , x_n]$ such that the image of $F$ in $A$ is $f$. Then the image of $F'_{x_j}$ is nonzero. By Lemma \ref{lemma:deriv} the image of $s^{m+1}F'_{x_j}$ is 0 in $k[x_1, \ldots, x_n]/(x_1^{\bar{d_1}}, \ldots , x_n^{\bar{d_n}})$, where $\bar{d_j}=d_j-1$ and $\bar{d_i}=d_i$ for all $i \ne j$. The three conditions of this proposition hold also for this ring. By the inductive assumption      
\[ \deg f = \deg F = \deg F'_{x_j} +1 > \frac{(t-1)-(m+1)}{2}+1 = \frac{t-m}{2}. \]
We have now proved that $s$ is a strong Lefschetz element. 
\end{proof}

\subsection{The classification of the SLP when $ n\geq 3$}
We now combine the results in the two previous sections to obtain a classification of the SLP when 
$ n\geq 3$. The conjecture by Cook II on the SLP \cite[Conjecture 7.6]{cook} follows as a special case of Theorem \ref{thm:slp}.

\begin{theorem}\label{thm:slp}
Let $A=k[x_1, \ldots, x_n]/(x_1^{d_1}, \ldots, x_n^{d_n})$ where $n \ge 3$, $d_i \ge 2$ for all $i$, and $k$ is a field of characteristic $p>0$. Let $t = \sum_{i = 1}^n (d_i-1)$ and
let $d_1=\max(d_1, \dots, d_n)$. Write $d_1=N_1p+r_1$ with $0<r_1\le p$.
Then $A$ has the SLP if and only if one of the following two conditions hold
\begin{enumerate}
\item $t <p$,
\item $d_1 > p$, $d_i \leq p$ for $i =2, \ldots, n$ and $\sum_{i=2}^n(d_i-1) \le \min(r_1,p-r_1)$.
\end{enumerate} 
\end{theorem}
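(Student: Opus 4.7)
The plan is to derive Theorem \ref{thm:slp} as a bookkeeping consequence of Propositions \ref{prop:slpnec}, \ref{prop:slpcase1}, and \ref{prop:slpcase2}. Since the strong Lefschetz property is invariant under permuting the variables, for the necessity direction I would first reorder so that $d_1 \ge d_2 \ge \cdots \ge d_n$, which matches the hypothesis of Proposition \ref{prop:slpnec} and is consistent with the theorem's labeling $d_1 = \max(d_1,\dots,d_n)$.

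For the sufficiency direction, condition (1) is exactly Proposition \ref{prop:slpcase1}. I would then show that condition (2) implies all three hypotheses of Proposition \ref{prop:slpcase2}: the requirement $d_1 > p$ with $d_i \le p$ for $i \ge 2$ is given, the inequality $\sum_{i=2}^n(d_i-1) \le r_1$ is immediate from the bound by $\min(r_1,p-r_1)$, and $r_1 + \sum_{i=2}^n(d_i-1) \le p$ follows from $\sum_{i=2}^n(d_i-1) \le p - r_1$.

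For necessity, suppose $A$ has the SLP. Part (1) of Proposition \ref{prop:slpnec} forces $p \ge 3$, and part (5) forces $d_2 \le p$, hence $d_i \le p$ for all $i \ge 2$. I would then split on the size of $d_1$. If $d_1 \le p$, part (4) rules out $\sum_{i=1}^n(d_i-1) \ge p$, so $t < p$, which is condition (1). If $d_1 > p$, parts (2) and (3) respectively yield $\sum_{i=2}^n(d_i-1) \le r_1$ and $\sum_{i=2}^n(d_i-1) \le p - r_1$, combining to $\sum_{i=2}^n(d_i-1) \le \min(r_1,p-r_1)$, which is condition (2).

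Since all the substantive work has already been carried out in the two preceding subsections, there is no real obstacle remaining; the proof is a short case analysis that enumerates the five exclusions of Proposition \ref{prop:slpnec} and verifies that what is left is precisely conditions (1) and (2). The only point worth a moment's attention is confirming that the relabeling needed to apply Proposition \ref{prop:slpnec} does not conflict with the hypothesis $d_1 = \max$ in the statement of the theorem, which is immediate because both conventions single out the same extremal variable.
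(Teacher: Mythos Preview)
Your proposal is correct and follows essentially the same approach as the paper: both arguments combine Propositions \ref{prop:slpcase1} and \ref{prop:slpcase2} for sufficiency and exhaust the five cases of Proposition \ref{prop:slpnec} for necessity. The only cosmetic difference is that the paper argues the contrapositive (assume conditions (1) and (2) fail and match to a clause of Proposition \ref{prop:slpnec}), whereas you argue directly (assume SLP and rule out each clause), but the case split and the propositions invoked are identical.
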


\begin{proof}
The above conditions is exactly the conditions from Proposition \ref{prop:slpcase1} and Proposition \ref{prop:slpcase2}. We want to prove that the SLP fails in all other cases. Notice first that when $p=2$, condition 2 in Proposition \ref{prop:slpcase2} fails. So does the condition in Proposition \ref{prop:slpcase1}, since $t \ge 3$ when $n \ge 3$ and all $d_i \ge 2$. This agrees with Proposition \ref{prop:slpnec}, which states that we can never have the SLP when $p=2$. Let us assume $p \ge 3$ for the remainder of this proof.

Let $d_1 \geq \cdots \geq d_n$ be such that $A$ does not fulfill the condition of Proposition \ref{prop:slpcase1}, that is $t \ge p$. If condition 1 of Proposition \ref{prop:slpcase2} is not satisfied, we have one of the following two cases.
\begin{itemize}
 \item $t \ge p$ and $d_1 \le p,$
 \item $t \ge p$ and $d_2 >p$.
\end{itemize}
In the latter, the inequality $t \geq p$ is superfluous. By Proposition \ref{prop:slpnec} $A$ fails to have the SLP in these two cases. Suppose instead that the condition 1 of Proposition \ref{prop:slpcase2} is satisfied, but that condition 2 or 3 fails. This gives the two cases
\begin{itemize}
\item $t \geq p$, $d_1 > p$, $d_2 \leq p$ and $\sum_{i=2}^n(d_i-1) >r_1,$
\item $t \geq p$, $d_1 > p$, $d_2 \leq p$ and $\sum_{i=2}^n(d_i-1) >p-r_1.$ 
\end{itemize}
Also here, the inequality $t \geq p$ is superfluous and $A$ fails to have the SLP by Proposition \ref{prop:slpnec}.
\end{proof}

 Let us now turn to \cite[Conjecture 7.6]{cook}, namely that if 
 $\max(d_1, \ldots, d_n) \le (t+1)/2$, then $A$ has the SLP if and only if the characteristic of $k$ is zero or greater than $t$. 
 
 We already know that $A$ has the SLP in characteristic zero and by Theorem \ref{thm:slp}, $A$ has the SLP when $p>t$. Suppose that the characteristic is $p \leq t$. We can assume that $d_1 = \max(d_1,\ldots,d_n)$. Then $d_1 \leq (t+1)/2 \Leftrightarrow 2 d_1 \leq t + 1 \Leftrightarrow 2d_1 \leq \sum_{i=1}^n (d_i -1) + 1\Leftrightarrow d_1 \leq \sum_{i=2}^n (d_i-1)$. By Theorem \ref{thm:slp}, in order for $A$ to have the SLP in case $p \leq t$, it must hold that $d_1 > p$. But then $p < \sum_{i=2}^n (d_i-1)$, so 
 $\sum_{i=2}^n (d_i-1)> \min(r_1,p-r_1)$. Thus $A$ fails to have the SLP and we have settled the conjecture.

\section{The weak Lefschetz property}

\begin{definition} Let $A$ be a graded artinian algebra. We say that $A$ has the \emph{weak Lefschetz property} (WLP) if there is a linear form $\ell$ such that the map $A_i \to A_{i+1}$, given by $a \mapsto \ell \cdot a$, has maximal rank for all $i$. In this case, $\ell$ is said to be a \emph{weak Lefschetz element}.
\end{definition}

In the next section we will generalize earlier results on the WLP for algebras of the form $k[x_1, \ldots, x_n]/(x_1^{d_1}, \ldots, x_n^{d_n})$.
Due to this generalization, the WLP is now completely classified in the case of uniform $d_i$'s, that is, in the case when $d_1= \dots =d_n$, see Table \ref{table:wlp}.

In Section \ref{sec:sufWLP}, we will turn to sufficient conditions on the WLP for mixed degrees. 

\subsection{Generalizations of earlier results}

The previous results in the literature on the WLP in positive characteristic are mainly under the assumption that the residue field is infinite. When working in positive characteristic it is natural to consider finite fields. Our first result is Proposition \ref{prop:wlpfinite}, where we show that the WLP of $A = k[x_1,\ldots,x_n]/I$ is independent of the cardinality of the field in the case when $I$ is monomial.

%
%\begin{proposition} \label{prop:wlpfinite}
%Let $k$ be a field of characteristic $p$ and let $k'$ be an extension field of $k$. Let $I \subset k[x_1,\ldots,x_n]$ be a monomial ideal. Then $A = k[x_1,\ldots,x_n]/I$ has the WLP if and only if $A' =A \otimes_k k'$ has the WLP.
%\end{proposition}
%\begin{proof}
%Let $k''$ be an infinite extension field of $k'$. Suppose that $A'$ has the WLP. Then there is an element $c_1 x_1 + \cdots + c_n x_n$, $c_i \in k'$, 
%such that the multiplication map $A'_i \to A'_{i+1}$, $a \mapsto a \cdot (c_1 x_1 + \cdots + c_n x_n)$, has maximal rank for each $i$, that is, we have a set of matrices with coefficients in $k'$, each of which has full rank. But then each matrix has full rank also over $A' \otimes_{k'} k''$, that is, $A''$ has the WLP. 
%
% Suppose that $A''$ has the WLP. By \cite[Proposition 2.2]{migliore}, $x_1+ \cdots + x_n$ is a weak Lefschetz element of $A''$, that is, the multiplication map $A''_i \to A''_{i+1}$, $a \mapsto a \cdot (x_1 + \cdots + x_n)$, has maximal rank for each $i$. This means that we have a set of matrices with entries in $\{0,1\} \subset k' \subset k''$, each of which has full rank over $k''$. But then each matrix has full rank also over $k'$.
% 
%We have shown that $A'$ has the WLP if and only if $A''$ has the WLP. But $k''$ is also an infinite extension field of $k$, so replacing $k'$ by $k$ and repeating the argument above, we arrive in the conclusion that $A$ has the WLP if and only if $A''$ has the WLP. This finishes the proof.
%\end{proof}

\begin{lemma} \label{lemma:wlpfinitenew}
Let $k$ be a field and let $k'$ be an extension field of $k$. Let $I \subset k[x_1,\ldots,x_n]$ be an ideal, let $A = k[x_1,\ldots,x_n]/I$ and let $f$ be a form in 
$A$. Then multiplication by $f$ on $A$ has maximal rank in every degree if and only if multiplication by $f$ on $A' =A \otimes_k k'$ has maximal rank in every degree.
\end{lemma}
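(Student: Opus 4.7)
The plan is to identify the multiplication-by-$f$ map on $A'$ with the base change of the multiplication-by-$f$ map on $A$, and then invoke the fact that $k$-linear maps of finite-dimensional vector spaces preserve injectivity and surjectivity under extension of scalars.

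First I would set up the grading on $A'$. Since $k'$ is flat over $k$ (being a module over a field), tensoring the short exact sequence $0 \to I \to k[x_1,\ldots,x_n] \to A \to 0$ with $k'$ over $k$ gives $A' = k'[x_1,\ldots,x_n]/(I\cdot k'[x_1,\ldots,x_n])$, and, assuming $I$ is homogeneous (as is implicit since we talk about forms and degrees), the identification $A'_i = A_i \otimes_k k'$ holds degree by degree. Next, for each $i$, the multiplication map $\mu_{f,i} : A_i \to A_{i+d}$ (with $d=\deg f$) is $k$-linear, and its base change $\mu_{f,i}\otimes_k \mathrm{id}_{k'} : A_i\otimes_k k' \to A_{i+d}\otimes_k k'$ is exactly the multiplication map by the image of $f$ in $A'$ acting on $A'_i$. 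This identification is forced by functoriality of $\otimes_k k'$ and the fact that $f \in A$ sits inside $A'$ under $A \hookrightarrow A\otimes_k k'$.

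Finally I would invoke the routine linear algebra fact that, for a $k$-linear map $T:V\to W$ between finite-dimensional $k$-vector spaces, flatness of $k'/k$ yields $\ker(T\otimes_k k') = \ker(T)\otimes_k k'$ and $\mathrm{coker}(T\otimes_k k') = \mathrm{coker}(T)\otimes_k k'$. Consequently $\dim_k \ker T = \dim_{k'} \ker(T\otimes k')$ and $\dim_k \mathrm{coker}\, T = \dim_{k'} \mathrm{coker}(T\otimes k')$, so $T$ is injective (resp.\ surjective) if and only if $T\otimes_k k'$ is. Applying this to each $\mu_{f,i}$ gives the lemma.

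There is no real obstacle here; the only subtlety worth remarking on is that the argument requires the grading to behave well under base change, which is why the homogeneity of $I$ (implicit in the statement) is needed to write $A'_i = A_i\otimes_k k'$. Everything else is a direct consequence of the flatness of a field extension.
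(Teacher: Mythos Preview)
Your proposal is correct and is essentially the same argument as the paper's, just phrased more abstractly: the paper observes that each multiplication map is represented by a matrix with entries in $k$ and that such a matrix has full rank over $k$ if and only if it has full rank over $k'$, while you formalize this via flatness of $k'/k$ and the identifications $\ker(T\otimes k')=\ker(T)\otimes k'$, $\mathrm{coker}(T\otimes k')=\mathrm{coker}(T)\otimes k'$. Both reduce to the invariance of rank under field extension.
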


\begin{proof}
It is clear that if multiplication by $f$ on $A$ has maximal rank in every degree, so does multiplication by $f$ on $A'$. 

Suppose that multiplication by $f$ on $A'$ has maximal rank. Since $I \subset k[x_1,\ldots,x_n]$, this means that we have a set of of matrices with coefficients in $k$, all of which has full rank over $k'$. But then they also have full rank over $k$, that is, multiplication by $f$ on $A$ has maximal rank in every degree. 
\end{proof}

We can now generalize \cite[Proposition 2.2]{migliore} by dropping the assumption on $k$ being infinite.

\begin{proposition}\label{prop:wlpfinite}
Let $k$ be a field and let $k'$ be an extension field of $k$. Let $I \subset k[x_1,\ldots,x_n]$ be a monomial ideal. Then the following are equivalent.

\begin{enumerate}
\item $A:=k[x_1,\ldots,x_n]/I$ has the WLP.
\item $A' := A \otimes_k k'$ has the WLP.
\item $x_1+ \cdots + x_n$ is a weak Lefschetz element of $A$.
\item $x_1+ \cdots + x_n$ is a weak Lefschetz element of $A'$.
\end{enumerate}
\end{proposition}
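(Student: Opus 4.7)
The plan is to reduce all four statements to a single non-trivial claim and then close the loop using Lemma \ref{lemma:wlpfinitenew}. The implications (3)$\Rightarrow$(1) and (4)$\Rightarrow$(2) are tautological; (3)$\Leftrightarrow$(4) is immediate by applying Lemma \ref{lemma:wlpfinitenew} to $f=x_1+\cdots+x_n$. So what remains is to prove (1)$\Rightarrow$(3) in the following sharper form: for any field $F$ and any monomial ideal $J \subset F[x_1,\ldots,x_n]$, if $F[x_1,\ldots,x_n]/J$ has the WLP, then $x_1+\cdots+x_n$ is a weak Lefschetz element. Applied once to $(F,J)=(k,I)$ this yields (1)$\Rightarrow$(3), and applied to $(F,J)=(k',I\cdot k'[x_1,\ldots,x_n])$ it yields (2)$\Rightarrow$(4), closing the chain.

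For the sharper claim, I would first reduce $\bar{A}:=A\otimes_k \bar k$, where $\bar k$ is an algebraic closure of $k$. By Lemma \ref{lemma:wlpfinitenew} applied to a weak Lefschetz element $\ell\in A_1$, the algebra $\bar A$ has the WLP, and, in fact, the set $U$ of weak Lefschetz elements of $\bar A$ is a non-empty Zariski open subset of $\bar A_1 \cong \bar k^n$, cut out by the non-vanishing of suitable minors of the multiplication matrices $\cdot\ell:\bar A_i\to \bar A_{i+1}$. (To avoid irrelevant coordinates one may first discard each $x_i$ with $x_i\in J$, so that $\bar A_1$ is spanned by the remaining images of the $x_i$; the form $x_1+\cdots+x_n$ restricts to the sum over these surviving variables.)

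The crucial observation is torus invariance. For $t=(t_1,\ldots,t_n)\in(\bar k^*)^n$, the substitution $x_i\mapsto t_i x_i$ defines an automorphism $\phi_t$ of $\bar k[x_1,\ldots,x_n]$ that maps every monomial to a nonzero scalar multiple of itself. Since $J$ is monomial, $\phi_t(J)=J$, so $\phi_t$ descends to a graded $\bar k$-algebra automorphism of $\bar A$. Graded automorphisms preserve the rank of multiplication by a linear form in every degree, hence $U$ is invariant under $(c_1,\ldots,c_n)\mapsto(t_1c_1,\ldots,t_nc_n)$. Because $\bar k$ is infinite, the non-empty open set $U$ meets the non-empty open set where every coordinate is nonzero; pick such a point $(c_1,\ldots,c_n)\in U$, and rescale by $t_i=c_i^{-1}$ to conclude that $(1,\ldots,1)\in U$. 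Thus $x_1+\cdots+x_n$ is a weak Lefschetz element of $\bar A$, and a second application of Lemma \ref{lemma:wlpfinitenew} pushes this down to $A$.

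The main obstacle is the torus-invariance step: one must be careful that $\phi_t$ genuinely descends to $A\otimes_k\bar k$ (which is precisely where monomiality of $I$ is used), and that the passage from a generic point of $U$ to the specific point $(1,\ldots,1)$ is legitimate over a finite base field. The trick is to pass through the algebraic closure so that the density argument in the open set $\{c_1\cdots c_n\neq 0\}$ becomes valid, and then descend back with Lemma \ref{lemma:wlpfinitenew} applied to the specific form $x_1+\cdots+x_n$.
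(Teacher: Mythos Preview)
Your argument is correct. The overall architecture matches the paper's: pass from $k$ to an infinite extension, establish there that $x_1+\cdots+x_n$ is a weak Lefschetz element whenever the WLP holds, and then descend via Lemma~\ref{lemma:wlpfinitenew}. The only real difference is that the paper outsources the middle step by citing \cite[Proposition~2.2]{migliore} (valid over infinite fields), whereas you supply a self-contained proof of that step via the torus action $x_i\mapsto t_i x_i$ on $\bar A$, using irreducibility of $\bar k^n$ to find a WLE with all coordinates nonzero and then rescaling. Your version is thus more explicit and avoids the external reference, at the cost of a few extra lines; the paper's version is shorter but depends on the reader accepting the cited result. Both are perfectly valid, and indeed the torus-invariance argument you give is essentially how \cite[Proposition~2.2]{migliore} is proved.
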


\begin{proof}
%The equivalence of the first and the second statement follows from Lemma \ref{lemma:wlpfinitenew}. 
Let $k''$ be an infinite field such that $k \subset k' \subseteq k''$.  By \cite[Proposition 2.2]{migliore}, $x_1+ \cdots + x_n$ is a weak Lefschetz element of $A''$ if and only if $A''$ has the WLP. By Lemma \ref{lemma:wlpfinitenew}, it follows that $x_1+ \cdots + x_n$ is a weak Lefschetz element of $A$ if and only if it is a weak Lefschetz element of $A''$. Hence $A$ has the WLP if and only if $A''$ has the WLP. Now repeat the same argument with $A$ replaced by $A'$.
\end{proof}

%\begin{corollary}
%Let $k$ be a field. Let $I \subset k[x_1,\ldots,x_n]$ be a monomial ideal. Then $A=k[x_1,\ldots,x_n]/I$ has the WLP if and only if $x_1+\cdots+x_n$ is a weak Lefschetz element.
%\end{corollary}

The WLP in three variables for uniform degrees $d_1 = d_2 = d_3=d$ and $k$ algebraically closed was classified by Brenner and Kaid \cite{brenner}. When $d_1 = \cdots = d_n=d$ with $n \geq 4$ and $k$ an infinite field, the WLP was classified by Kustin and Vraciu \cite{kustin}. Using Proposition \ref{prop:wlpfinite}, we can extend these results. Thus the WLP in positive characteristic is now classified for uniform degrees, see Table \ref{table:wlp} below.

\begin{table}[h!]
$$
\begin{array}{l|ll}
n & \text{Condition on the characteristic} \\
\hline
    2 & \text{Has the WLP independent of the characteristic \cite{harima}, see also} \\
    & \text{Remark \ref{rmk:n2}.}
    \\
    3,d \text{ even} & \text{fails to have the WLP if and only there exists a $k \in \mathbb{N}$ and an}\\
    & n \in \mathbb{N}_{+}
    \text{such that } \frac{3d}{6k+2} > p^n > \frac{3d}{6k+4},
      \\
      3,d \text{ odd} & \text{fails to have the WLP if and only there exists a $k \in \mathbb{N}$ and an}\\          
      & n \in \mathbb{N}_{+}
      \text{such that} \frac{3d-1}{6k+2} > p^n > \frac{3d+1}{6k+4}, \text{ cf. \cite{brenner} and Proposition \ref{prop:wlpfinite}}.
        \\
    4& \text{Has the WLP if and only if }  d = kq+r \text{ for integers } k,q,d \text{ with }\\
    & 1 \leq k \leq \frac{p-1}{2}, r \in \{\frac{q-1}{2}, \frac{q+1}{2}\} \text{ and } q = p^e \text{ for some non-negative} \\
    & \text{integer } t, \text{ cf. \cite{kustin} and Proposition \ref{prop:wlpfinite}.}
    \\
    \geq 5 & \text{Has the WLP if and only if } p>(n(d-1)+1)/2, \text{ cf. \cite{kustin} and} \\ &\text{Proposition \ref{prop:wlpfinite}.}
  %  \hline
  \end{array}
   $$
\caption{The classification of the WLP in positive characteristic for uniform degrees.}
  \label{table:wlp}
  \end{table}

\subsection{Sufficient conditions for the presence of the WLP for mixed degrees} \label{sec:sufWLP}

For mixed degrees, the situation is far from being understood. A combinatorial characterization of the WLP for $n=3$ and $k$ infinite was given in \cite{li}. Again, by Proposition \ref{prop:wlpfinite}, we can extend this result to finite fields. In \cite{cook}, Cook II giva a sufficient condition for the presence of the WLP, under the assumptions $n \ge 3$, and $k$ an infinite field. He conjectured, \cite[Conjecture 7.4]{cook}, that this result can be slightly improved. This conjecture is true, and the field $k$ does not need to be infinite, as we will see in the theorem below.  

\begin{theorem} \label{thm:wlplimit}
Let $A = k[x_1,\ldots,x_n]/(x_1^{d_1}, \ldots, x_n^{d_n})$, where $k$ is a field of characteristic $p>0$ and $t = \sum_{i = 1}^n (d_i-1).$ If $\max(p,d_1, \ldots, d_n)>(t+1)/2$, then $A$ has the WLP.
\end{theorem}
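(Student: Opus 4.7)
The plan is to observe that this theorem is a direct corollary of Theorem~\ref{thm:stanleymrp} in the case $m = 1$. The hypothesis $\max(p, d_1, \ldots, d_n) > (t+1)/2$ is precisely the condition needed to invoke Theorem~\ref{thm:stanleymrp} with exponent $m = 1$, and that theorem tells us the multiplication map $A_i \to A_{i+1}$ given by $f \mapsto f \cdot (x_1 + \cdots + x_n)$ has maximal rank for every $i$. By the definition of the WLP, this says that $\ell = x_1 + \cdots + x_n$ is a weak Lefschetz element and hence $A$ has the WLP.

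In more detail, one could unpack Theorem~\ref{thm:stanleymrp} to see how the two cases combine. If $p > (t+1)/2$ then $1 + t < 2p$, so injectivity in low degrees follows from the characteristic-based Theorem~\ref{thm:stanley}; if instead some $d_i > (t+1)/2$, then injectivity follows from the degree-based Theorem~\ref{thm:stanleydi}. In either case Proposition~\ref{prop:maxrank} upgrades injectivity below $(t-1)/2$ to maximal rank in every degree, which is exactly the WLP statement.

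There is no real obstacle: the heavy lifting was already done in Section~\ref{section Stanley}, and the improvement over the formulation of \cite{cook} (removing the hypothesis that $k$ be infinite) is automatic, since Theorem~\ref{thm:stanleymrp} makes no assumption on the cardinality of $k$. Alternatively, one could first prove the result under the assumption that $k$ is infinite and then appeal to Proposition~\ref{prop:wlpfinite} to descend to an arbitrary field of characteristic $p$, but this detour is unnecessary given how Theorem~\ref{thm:stanleymrp} was stated.
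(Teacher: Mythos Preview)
Your proposal is correct and matches the paper's own second proof verbatim: set $m=1$ in Theorem~\ref{thm:stanleymrp} to conclude that $x_1+\cdots+x_n$ is a weak Lefschetz element. The paper also records an alternative first proof via results of Vraciu and Migliore--Mir\'o-Roig together with Proposition~\ref{prop:wlpfinite}, which is precisely the detour you mention and correctly dismiss as unnecessary.
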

\begin{proof}[First proof]
Combine \cite[Theorem 1.6 (II)]{vraciu} and \cite[Prop. 5.2]{mig_miro-roig}, and Proposition \ref{prop:wlpfinite}.
\end{proof}
\begin{proof}[Second proof]
Put $m=1$ in Theorem \ref{thm:stanleymrp}, and get that $x_1+ \dots + x_n$ is a weak Lefschetz element. 
\end{proof}

\begin{remark}
As the SLP implies the WLP, we have the WLP when the conditions in Theorem \ref{thm:slp} are satisfied. But when condition 1 holds, obviously $p>(t+1)/2$. When condition 2 holds, $d_1>\sum_{i=2}^n(d_i-1)$.  Adding $d_1$ to both sides, we see that this is equivalent to $d_1>(t+1)/2$.
\end{remark}

\begin{remark} \label{rmk:n2}
For $A=k[x_1,x_2]/(x_1^{d_1},x_2^{d_2})$, the condition $\max(d_1,d_2)>(t+1)/2$ is satisfied for any choice of $d_1$ and $d_2$. By Theorem \ref{thm:wlplimit}, it follows that this $A$ has the WLP, independent of the characteristic of $k$. This is a folklore result, but is mentioned since all the references to this result point to the paper \cite{harima}, which only treats characteristic zero.
\end{remark}

We will now go deeper into the mixed degree case. While we only focus on sufficient conditions for the presence of the WLP, we believe that the results, and especially Theorem \ref{thm:wlpfamily} below, is an important step towards understanding the WLP for mixed degrees. 

First a word on the notation. Let $A$ be a graded algebra over a field $k$ and let $f$ be a form of degree $d$. By definition, multiplication by $f$ on $A$ has maximal rank in every degree if and only if $\dim_{k} (A/(f))_i  = \max(\dim_k A_i-\dim_k A_{i-d},0)$ for $i \geq d$. This condition can be expressed in terms of truncated Hilbert series \cite{froberg} --- multiplication by $f$ on $A$ has maximal rank in every degree if and only if $(A/f)(t) = [(1-t^d)A(t)]$, where $A(t)$ denotes the Hilbert series of $A$ where $[\sum_{i\geq 0} a_i t^i]$ means truncate at the first negative coefficient.

We are now ready to state the following useful and probably well known lemma.
\begin{lemma}\label{lemma:wlp-maxrank}
 The algebra $A=k[x_1, \ldots, x_n]/(x_1^{d_1}, \ldots, x_n^{d_n})$ has the WLP if and only if multiplication by $(x_1+ \dots + x_{n-1})^{d_n}$ has maximal rank in every degree in $B=k[x_1, \ldots, x_{n-1}]/(x_1^{d_1}, \ldots, x_{n-1}^{d_{n-1}})$. 
\end{lemma}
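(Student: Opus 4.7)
The plan is to reduce the WLP of $A$ to a Hilbert-series condition on the quotient of $B$, using the standard elimination of one variable.

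First, by Proposition \ref{prop:wlpfinite}, $A$ has the WLP if and only if $\ell := x_1+\cdots+x_n$ is a weak Lefschetz element of $A$. So it is enough to prove that multiplication by $\ell$ on $A$ has maximal rank in every degree if and only if multiplication by $(x_1+\cdots+x_{n-1})^{d_n}$ on $B$ does.

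Next I would identify the quotient $A/\ell A$. Eliminating $x_n$ via the relation $x_n=-(x_1+\cdots+x_{n-1})$ turns the relation $x_n^{d_n}=0$ into $(-1)^{d_n}(x_1+\cdots+x_{n-1})^{d_n}=0$, while the other relations $x_i^{d_i}=0$ for $i\le n-1$ are precisely the ones defining $B$. This produces a graded isomorphism
\[
A/\ell A \;\cong\; B\bigl/\bigl((x_1+\cdots+x_{n-1})^{d_n}\bigr).
\]
Then I would compare Hilbert series. Since $x_n$ is a non-zero-divisor on $k[x_1,\ldots,x_n]/(x_1^{d_1},\ldots,x_{n-1}^{d_{n-1}})$, the chain of equalities
\[
A(t)=B(t)\cdot(1+t+\cdots+t^{d_n-1})=B(t)\cdot\frac{1-t^{d_n}}{1-t}
\]
holds, and hence $(1-t)A(t)=(1-t^{d_n})B(t)$. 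In particular, the truncated series $[(1-t)A(t)]$ and $[(1-t^{d_n})B(t)]$ agree coefficientwise.

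Finally, I would apply the truncated-Hilbert-series criterion stated just before the lemma: multiplication by a form of degree $d$ on a graded algebra $C$ has maximal rank in every degree if and only if $(C/(f))(t)=[(1-t^d)C(t)]$. Applied to $\ell$ on $A$ and to $(x_1+\cdots+x_{n-1})^{d_n}$ on $B$, the two conditions become
\[
(A/\ell A)(t)=[(1-t)A(t)]\qquad\text{and}\qquad (B/((x_1+\cdots+x_{n-1})^{d_n}))(t)=[(1-t^{d_n})B(t)],
\]
whose left-hand sides agree by the isomorphism of step two and whose right-hand sides agree by the Hilbert-series computation of step three. Hence the two maximal-rank statements are equivalent, completing the proof. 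There is no real obstacle here; the only small point to double-check is that the elimination of $x_n$ yields an honest graded isomorphism and that the global sign $(-1)^{d_n}$ is harmless because we are quotienting by the principal ideal generated by $(x_1+\cdots+x_{n-1})^{d_n}$.
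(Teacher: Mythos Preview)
Your proof is correct and follows essentially the same approach as the paper: both identify $A/(x_1+\cdots+x_n)\cong B/((x_1+\cdots+x_{n-1})^{d_n})$, compute $(1-t)A(t)=(1-t^{d_n})B(t)$, and apply the truncated Hilbert series criterion for maximal rank. Your explicit invocation of Proposition~\ref{prop:wlpfinite} to reduce to the specific element $x_1+\cdots+x_n$ just makes overt what the paper uses silently.
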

\begin{proof}
 %and let $B =  k[x_1, \ldots, x_{n-1}]/(x_1^{d_1}, \ldots, x_{n-1}^{d_{n-1}})$.
 The algebra $A$ has the WLP if and only if $(A/(x_1+\cdots +x_n))(t) = [(1-t)A(t)]$, while 
multiplication by  $(x_1+ \dots + x_{n-1})^{d_n}$ on $B$ has maximal rank in every degree if and only if $(B/(x_1+\cdots +x_{n-1})^{d_n})(t) = [(1-t^{d_{n}})B(t)]$. 

Now
$A(t) = (1-t^{d_n})B[x_n](t)=(1-t^{d_n})B(t)/(1-t)$ and $A/(x_1+\cdots +x_n) \cong B/\left((x_1+\cdots+x_{n-1})^{d_{n}}\right)$, so 
$A$ has the WLP if and only if  $(A/(x_1+\cdots +x_n))(t) = [(1-t)A(t)] = [(1-t^{d_n})B(t)]$, that is, if and only if multiplication by $(x_1+ \dots + x_{n-1})^{d_n}$ on $B$ has maximal rank in every degree.

\end{proof}
%and $d_{n-1}+d_n\ge\sum_{i=1}^{n-2}(d_i-1)$
\begin{theorem}\label{thm:wlpfamily}
 Let $k$ be a field of characteristic $p> 0$, and let $d_1, \ldots, d_n, a$ be positive integers such that $d_i \le p^a$ for $i=1, \ldots, n-2$. The algebra $A=k[x_1, \ldots, x_n]/(x_1^{d_1}, \ldots, x_n^{d_n})$ has the WLP if
 \[ B=k[x_1, \ldots, x_n]/(x_1^{d_1}, \ldots, x_{n-2}^{d_{n-2}}, x_{n-1}^{d_{n-1}+bp^a}, x_n^{d_n+bp^a}) \]
 has the WLP, for any (and hence all) positive integers $b$.  If, in addition, $d_{n-1}+d_n\ge\sum_{i=1}^{n-2}(d_i-1)$, $A$ has the WLP if and only if $B$ does. 
\end{theorem}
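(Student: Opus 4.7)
The plan is to translate the WLP statements on $A$ and $B$ into maximal-rank statements in one fewer variable, via Lemma~\ref{lemma:wlp-maxrank}. Write $\ell = x_1 + \cdots + x_{n-1}$, $R = k[x_1,\ldots,x_{n-2}]/(x_1^{d_1},\ldots,x_{n-2}^{d_{n-2}})$, $C_0 = R[x_{n-1}]/(x_{n-1}^{d_{n-1}})$ and $C = R[x_{n-1}]/(x_{n-1}^{d_{n-1}+bp^a})$. The lemma says that $A$ has the WLP iff multiplication by $\ell^{d_n}$ on $C_0$ has maximal rank in every degree, and $B$ has the WLP iff multiplication by $\ell^{d_n + bp^a}$ on $C$ does. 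The characteristic-$p$ crux is the Frobenius identity $\ell^{bp^a} = (x_1^{p^a} + \cdots + x_{n-1}^{p^a})^b$; combined with $d_i \le p^a$ for $i \le n-2$ (so $x_i^{p^a}=0$ in $C$), every monomial in the expansion vanishes except $x_{n-1}^{bp^a}$, giving $\ell^{d_n + bp^a} = \ell^{d_n}\cdot x_{n-1}^{bp^a}$ in $C$.

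Next I would decompose $C = \bigoplus_{j=0}^{d_{n-1}+bp^a-1} R\,x_{n-1}^{j}$ as a graded $k$-vector space. Multiplication by $x_{n-1}^{bp^a}$ on $C$ has kernel $K = \bigoplus_{j=d_{n-1}}^{d_{n-1}+bp^a-1} R\,x_{n-1}^{j}$ and image $I = \bigoplus_{j=bp^a}^{d_{n-1}+bp^a-1} R\,x_{n-1}^{j}$, and the induced graded isomorphism $\phi : C_0 \to I$, $f \mapsto x_{n-1}^{bp^a} f$ (shifting degrees by $bp^a$), intertwines multiplication by $\ell^{d_n}$. The intertwining holds because, on the polynomial level, the survival condition for a monomial $x_1^{a_1}\cdots x_{n-1}^{a_{n-1}}$ after multiplying by $x_{n-1}^{bp^a}$ and reducing modulo the ideal of $C$ is precisely $a_i < d_i$ for $i\le n-2$ and $a_{n-1}<d_{n-1}$, the same condition as surviving the reduction modulo the ideal of $C_0$ before the shift. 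Factoring multiplication by $\ell^{d_n+bp^a}$ on $C$ as $C \twoheadrightarrow I$ by $x_{n-1}^{bp^a}$ (with kernel $K$), followed by multiplication by $\ell^{d_n}$ on $I$ (conjugate via $\phi$ to multiplication by $\ell^{d_n}$ on $C_0$), yields the key identity
\[
\operatorname{rank}\bigl(C_i \xrightarrow{\,\cdot\,\ell^{d_n+bp^a}} C_{i+d_n+bp^a}\bigr) = \operatorname{rank}\bigl((C_0)_i \xrightarrow{\,\cdot\,\ell^{d_n}} (C_0)_{i+d_n}\bigr)
\]
for every $i$.

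To extract maximal-rank statements I would invoke Proposition~\ref{prop:maxrank}: the WLP of $A$ is equivalent to injectivity of $\cdot\ell^{d_n}$ on $(C_0)_i$ for $i\le (t_0-d_n)/2$, where $t_0=\sum_{i=1}^{n-1}(d_i-1)$, and the WLP of $B$ to injectivity of $\cdot\ell^{d_n+bp^a}$ on $C_i$ for the same range (since the socle degree of $C$ is $t_0+bp^a$). Combined with $\dim C_i = \dim (C_0)_i + \dim K_i$ and the rank identity, injectivity on $C_i$ is equivalent to $K_i=0$ together with injectivity on $(C_0)_i$. This immediately gives the forward direction ``WLP of $B$ implies WLP of $A$'', independent of the extra hypothesis. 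Moreover, $K_i = \bigoplus_{j\ge d_{n-1}} R_{i-j}$ vanishes in the relevant range precisely when $i<d_{n-1}$, a condition independent of $b$, which justifies the ``any (and hence all)'' clause.

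For the converse under $d_{n-1}+d_n\ge T:=\sum_{i=1}^{n-2}(d_i-1)$, a short calculation gives $(t_0-d_n)/2 = (T+d_{n-1}-1-d_n)/2 < d_{n-1}$, so $K_i=0$ throughout $i\le (t_0-d_n)/2$, and injectivity of $\cdot\ell^{d_n}$ on $(C_0)_i$ forces injectivity of $\cdot\ell^{d_n+bp^a}$ on $C_i$. I expect the main subtlety to be verifying the intertwining property for $\phi$ rigorously---that one may interchange truncation and multiplication by $x_{n-1}^{bp^a}$ in the comparison between $C_0$ and $C$; once that is in hand the remainder is bookkeeping with Proposition~\ref{prop:maxrank} and the decomposition $C = C_0 \oplus K$ as graded vector spaces.
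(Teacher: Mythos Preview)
Your proof is correct and rests on the same three ingredients as the paper's: the reduction to $n-1$ variables via Lemma~\ref{lemma:wlp-maxrank}, the reduction to injectivity in low degrees via Proposition~\ref{prop:maxrank}, and the Frobenius identity $\ell^{bp^a}=x_{n-1}^{bp^a}$ in $C$. The packaging differs. The paper treats only $b=1$ and argues element by element: given a nonzero $f\in A'$ with $s^{d_n}f=0$, it shows $x_{n-1}^{p^a}f\ne0$ in $B'$ with $s^{d_n+p^a}(x_{n-1}^{p^a}f)=0$, then bounds $\deg f$; the converse splits into the cases $f\ne0$ and $f=0$ in $A'$, and general $b$ is obtained by iterating. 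You instead organize everything through the rank identity and the graded splitting $C\cong C_0\oplus K$, which handles all $b$ at once and makes the ``any (and hence all)'' clause transparent (since in the range $i\le(t_0-d_n)/2$ one has $K_i\ne0$ exactly when $i\ge d_{n-1}$, via $R_{i-d_{n-1}}\ne0$, independently of $b$). Your case $K_i=0$ versus $K_i\ne0$ is precisely the paper's case $f\ne0$ versus $f=0$ in $A'$. The one step that needs care is the intertwining claim for $\phi$; it is easily checked by lifting to the polynomial ring, which is exactly how the paper phrases the corresponding step.
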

\begin{proof}
We will prove the theorem for $b=1$. When this is done we can replace $d_1, \ldots, d_n$ by $d_1, \ldots, d_{n-2}, d_{n-1}+p^a, d_n+p^a$, and apply the theorem once more. This shows that the theorem also is true for $b=2$, and in the same way for any positive integer $b$. 

Let 
 \[
  A'=\frac{k[x_1, \ldots, x_{n-1}]}{(x_1^{d_1}, \ldots, x_{n-1}^{d_{n-1}})} ~~\textrm{and} ~~ B'=\frac{k[x_1, \ldots, x_{n-1}]}{(x_1^{d_1}, \ldots, x_{n-2}^{d_{n-2}}, x_{n-1}^{d_{n-1}+p^a})},
 \]
 and put $t_{A'}=\sum_{i=1}^{n-1}(d_i-1)$ and $t_{B'}=\sum_{i=1}^{n-1}(d_i-1)+p^a$. Let $s=x_1+ \dots + x_{n-1}$. By Lemma \ref{lemma:wlp-maxrank} and Proposition \ref{prop:maxrank}, $A$ has the WLP if and only if multiplication by $s^{d_n}$ is injective for $i \le (t_{A'}-d_n)/2$, in $A'$. In the same way $B$ has the WLP if and only if multiplication by $s^{d_n+p^a}$ is injective for $i \le (t_{B'}-(d_n+p^a))/2$, in $B'$.
 
 Suppose first that $B$ has the WLP, and that there is a homogeneous nonzero $f\in A'$, such that $s^{d_n}f=0$ in $A'$. If we lift $f$ to the polynomial ring, this can be realized as 
 \[
  f \notin (x_1^{d_1}, \ldots, x_{n-1}^{d_{n-1}}), ~~ \textrm{and} ~~ s^{d_n}f \in (x_1^{d_1}, \ldots, x_{n-1}^{d_{n-1}}).
 \]
It follows that
\[
  x_{n-1}^{p^a}f \notin (x_1^{d_1}, ..., x_{n-2}^{d_{n-2}},x_{n-1}^{d_{n-1}+p^a}), ~~ \textrm{and} ~~ s^{d_n}x_{n-1}^{p^a}f \in (x_1^{d_1}, ..., x_{n-2}^{d_{n-2}}, x_{n-1}^{d_{n-1}+p^a}).
\]
Since $d_i \le p^a$ for $i=1, \ldots, n-2$ we have that $s^{p^a}=x_{n-1}^{p^a}$. The two statements above then says that $f \ne 0$, but $s^{d_n+p^a}f=0$, in $B'$. Since multiplication by $s^{d_n}$ is injective up to degree $[(t_{B'}-(d_n-p^{a}))/2]$ 
\[
 \deg f > \frac{t_{B'}-(d_n+p^a)}{2}=\frac{t_{A'}-d_n}{2}.
\]
It follows that $A$ has the WLP. 

Suppose now that $A$ has the WLP, and that there is a homogeneous nonzero $f\in B'$, such that $s^{d_n+p^a}f=0$ in $B'$. Again, this can be realized as
\[
  f \notin (x_1^{d_1}, ..., x_{n-2}^{d_{n-2}},x_{n-1}^{d_{n-1}+p^a}), ~~ \textrm{and} ~~ s^{d_n}x_{n-1}^{p^a}f \in (x_1^{d_1}, ..., x_{n-2}^{d_{n-2}}, x_{n-1}^{d_{n-1}+p^a}).
\]
It follows that $s^{d_n}f \in (x_1^{d_1}, \ldots, x_{n-1}^{d_{n-1}}).$ If $f \ne 0$ in $A'$, it follows from the fact that $A$ has the WLP that 
\[
 \deg f > \frac{t_{A'}-d_n}{2} =\frac{t_{B'}-(d_n+p^a)}{2}.
\]
This shows that $B$ has the WLP. Now we must also consider the case when $f=0$ in $A'$. This means that $f=gx_{n-1}^{d_{n-1}}$ in $B'$, where $g$ is not divisible by $x_{n-1}^{p^a}$. Especially, $g$ is not zero. Then $\deg f \ge d_{n-1}$. It follows that $B$ has the WLP, if we can prove that $d_{n-1}>(t_{B'}-(d_n+p^a))/2$. The inequality can be rewritten as $d_{n-1}+d_n\ge\sum_{i=1}^{n-2}(d_i-1)$, which is true by assumption. 
\end{proof}

When $\sum_{i=1}^{n-2}(d_i-1) \le d_{n-1}+d_n <p$, the algebra $k[x_1, \ldots, x_n]/(x_1^{d_1}, \ldots, x_n^{d_n})$ has the WLP, by Theorem \ref{thm:wlplimit}. From Theorem \ref{thm:wlpfamily} we now get a family of algebras $k[x_1, \ldots, x_n]/(x_1^{d_1}, \ldots, x_{n-2}^{d_{n-2}}, x_{n-1}^{d_{n-1}+bp^a}, x_n^{d_n+bp^a})$ with the WLP, which are not all covered by Theorem \ref{thm:wlplimit}.
\begin{example}
Let $A=k[x_1, \ldots, x_5]/(x_1^3, x_2^3, x_3^4, x_4^5, x_5^5)$, with $k$ of characteristic 11. Then $t=2+2+3+4+4=15$, and $p>(t+1)/2$, so $A$ has the WLP by Theorem \ref{thm:wlplimit}. Since $5+5>2+2+3$ we can apply Theorem \ref{thm:wlpfamily}. Thus $k[x_1, \ldots, x_5]/(x_1^3, x_2^3, x_3^4, x_4^{5+11b}, x_5^{5+11b})$ has the WLP, for any nonnegative integer $b$.   
\end{example}

In \cite[Lemma 3.3]{cook}, it is shown that if $k$ is an infinite field of characteristic $p>0$ and $d_n=\sum_{i=1}^{n-1}(d_i-1)$, then $k[x_1,\ldots,x_n]/(x_1^{d_1}, \ldots, x_n^{d_n})$ has the WLP if and only if $\binom{d_n}{d_1-1, \ldots, d_{n-1}-1}$ is not divisible by $p$. We will now extend this result. 

\begin{proposition}\label{prop:wlpmultinom}
Let $k$ be a field of characteristic $p>0$, and suppose $d_n=\sum_{i=1}^{n-1}(d_i-1)$. If $\binom{d_n}{d_1-1, \ldots, d_{n-1}-1}$ is not divisible by $p$, then the algebras
\[ A = k[x_1,\ldots,x_n]/(x_1^{d_1}, \ldots, x_n^{d_n}) ~ \text{and }  ~ B = k[x_1,\ldots,x_n]/(x_1^{d_1}, \ldots, x_{n-1}^{d_{n-1}}, x_n^{d_n-1}) \] 
both have the WLP. If $\binom{d_n}{d_1-1, \ldots, d_{n-1}-1}$ is divisible by $p$, the algebra $A$ fails to have the WLP.
\end{proposition}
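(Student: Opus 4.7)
My plan is to apply Lemma \ref{lemma:wlp-maxrank} to translate WLP of $A$ and WLP of $B$ into maximal-rank statements for multiplication by $s^{d_n}$ and $s^{d_n-1}$, respectively, on the quotient $A'=k[x_1,\ldots,x_{n-1}]/(x_1^{d_1},\ldots,x_{n-1}^{d_{n-1}})$, where $s=x_1+\cdots+x_{n-1}$. The socle degree of $A'$ equals $t_{A'}=\sum_{i=1}^{n-1}(d_i-1)$, which is precisely $d_n$ by hypothesis. Invoking Proposition \ref{prop:maxrank}, both questions reduce to injectivity of the relevant map at degree $i=0$, i.e., to whether $s^{d_n}$ (resp. $s^{d_n-1}$) is nonzero in $A'$.

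For the case of $A$, I would expand $s^{d_n}$ via the multinomial theorem and observe that a monomial $x_1^{a_1}\cdots x_{n-1}^{a_{n-1}}$ with $\sum a_i=d_n$ survives modulo $(x_1^{d_1},\ldots,x_{n-1}^{d_{n-1}})$ iff $a_i\leq d_i-1$ for all $i$. Combined with $\sum a_i=\sum(d_i-1)$, this forces $a_i=d_i-1$ for every $i$. Hence
\[
s^{d_n}=\binom{d_n}{d_1-1,\ldots,d_{n-1}-1}\, x_1^{d_1-1}\cdots x_{n-1}^{d_{n-1}-1}
\]
in $A'$, which simultaneously gives the positive and negative assertions for $A$.

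For $B$, the same analysis applied to $s^{d_n-1}$ shows that the surviving monomials correspond to $(a_1,\ldots,a_{n-1})$ with $a_i\leq d_i-1$ and $\sum a_i=d_n-1$; these are exactly the tuples with $a_j=d_j-2$ and $a_i=d_i-1$ for $i\neq j$, one for each index $j$ with $d_j\geq 2$. Since these are distinct monomials in $A'$, $s^{d_n-1}\neq 0$ iff at least one coefficient $\binom{d_n-1}{d_1-1,\ldots,d_j-2,\ldots,d_{n-1}-1}$ is nonzero in $k$. The final step is the Pascal-type identity
\[
\binom{d_n}{d_1-1,\ldots,d_{n-1}-1}=\sum_{j:\,d_j\geq 2}\binom{d_n-1}{d_1-1,\ldots,d_j-2,\ldots,d_{n-1}-1},
\]
obtained by conditioning on the last factor of $s$ in the expansion of $s^{d_n}$. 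If the left-hand side is nonzero mod $p$, then so is at least one summand on the right, giving $s^{d_n-1}\neq 0$ in $A'$ and hence WLP for $B$. No serious obstacle arises; the only subtlety worth flagging is that the conclusion for $B$ does \emph{not} follow from a single multinomial coefficient being nonzero but from a sum of several, which is exactly why Pascal's identity is the appropriate tool rather than a direct computation.
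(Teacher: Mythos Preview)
Your argument is correct, and in fact more streamlined than the paper's own proof. Both proofs ultimately reduce to the same two computations---showing that $s^{d_n}$ and $s^{d_n-1}$ are nonzero in $A'=k[x_1,\ldots,x_{n-1}]/(x_1^{d_1},\ldots,x_{n-1}^{d_{n-1}})$---and both use the same Pascal-type identity for the $B$ case. The difference lies in how that reduction is reached. The paper works directly in the $n$-variable algebras $A$ and $B$: it introduces an auxiliary ring $R=k[x_1,\ldots,x_n]/(x_1^{d_1},\ldots,x_{n-1}^{d_{n-1}},x_n^{d_n+1})$, applies Theorem~\ref{thm:wlplimit} to conclude that $R$ has the WLP, and then argues that the multiplication maps $\cdot s:A_i\to A_{i+1}$ (with $s=x_1+\cdots+x_n$) inherit injectivity from $R$ in all degrees except the critical one, which is then analyzed by passing to $R/(s)$. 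Your route bypasses $R$ entirely: Lemma~\ref{lemma:wlp-maxrank} sends the problem to $A'$ in one step, and then the observation $t_{A'}=d_n$ combined with Proposition~\ref{prop:maxrank} collapses the maximal-rank condition to the single degree $i=0$ (since $(t_{A'}-d_n)/2=0$ and $(t_{A'}-(d_n-1))/2=1/2$). This is cleaner and avoids invoking Theorem~\ref{thm:wlplimit}. The paper's approach has the minor expository advantage of making the Hilbert-function shape of $A$ and $B$ explicit, but your use of the existing reduction lemmas is the more economical argument.
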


A proof of the fact that $A$ has the WLP if and only if $\binom{d_n}{d_1-1, \ldots, d_{n-1}-1}$ is not divisible by $p$ obviously follows from combining the proof of Lemma 3.3 in \cite{cook} and Proposition \ref{prop:wlpfinite}. However, instead of only proving that $B$ has the WLP, we found it natural to give a full proof, with the use of Proposition \ref{prop:maxrank}.

Notice that $(t_{A}+1)/2 = 2d_n/2 = d_n$, so when  $p \leq d_n$, we cannot use Theorem   \ref{thm:wlplimit} to draw any conclusion on $A$ when it comes to the WLP. Similarly, we have 
$(t_B + 1)/2 = (2d_n-1)/2$, so when $p \leq d_n-1$, we are outside the sufficient conditions in Theorem   \ref{thm:wlplimit}.

\begin{proof}
Let $R=k[x_1,\ldots,x_n]/(x_1^{d_1}, \ldots, x_{n-1}^{d_n-1}, x_n^{d_n+1}).$
This algebra has the WLP, by Theorem \ref{thm:wlplimit}. We next consider the algebra $A$. The Hilbert function of $A$ is strictly increasing up to $d_n-1$, then $H_A(d_n-1)=H_A(d_n)$, and after $d_n$ it is strictly decreasing. Let $s=x_1+ \dots + x_n$. By Proposition \ref{prop:maxrank} we only need to consider the maps $\cdot s: A_{i} \to A_{i+1}$ where $i<d_n$, which should be injective in the case of WLP. It follows directly from the fact that $R$ has the WLP, that $\cdot s: A_{i} \to A_{i+1}$ is injective for $i<d_n-1$. Hence $A$ has the WLP if and only if  $\cdot s:A_{d_n-1} \to A_{d_n}$ is injective. Notice that $A_{d_n-1}=R_{d_n-1}$ and $A_{d_n}=[R/(x_n^{d_n})]_{d_n}$. The map $s:R_{d_n-1} \to R_{d_n}$ is injective, so it follows that $\cdot s:A_{d_n-1} \to A_{d_n}$ also is injective, if and only if $x_n^{d_n} \notin sR_{d_n-1}$. It is equivalent to prove that $x_n^{d_n} \neq 0$ in $R/(s)$, which  holds if and only if $(x_1+ \ldots +x_{n-1})^{d_n} \ne 0$ in $k[x_1, \ldots, x_{n-1}]/(x_1^{d_1}, \ldots, x_{n-1}^{d_{n-1}}).$ Since 
\[(x_1+ \ldots +x_{n-1})^{d_n} = \binom{d_n}{d_1-1, \ldots,d_{n-1}-1}x_1^{d_1-1} \cdots x_{n-1}^{d_{n-1}-1}~~ \textrm{in} ~ A, \]
it follows that $A$ has the WLP if and only if the above multinomial coefficient is nonzero in $k$. 

Next we assume that $\binom{d_n}{d_1-1, \ldots, d_{n-1}-1}$ is not divisible by $p$, and consider $B$. The Hilbert function of $B$ is strictly increasing up to $d_n-1$, and then strictly decreasing. The multiplication maps $\cdot s : B_{i} \to B_{i+1}$ inherits the injectivity from $A$, when $i<d_n-2$. To prove that $B$ has the WLP we must prove that the map $\cdot s : B_{d_n-2} \to B_{d_n-1}$ also is injective. We know that $\cdot s : A_{d_n-2} \to A_{d_n-1}$ is injective, and $B_{d_n-2} = A_{d_n-2}$, and $B_{d_n-1} = [A/(x_n^{d_n-1})]_{d_n-1}$. The injectiveness follows in the same way as above, if we can prove that $(x_1+ \ldots +x_{n-1})^{d_n-1} \ne 0$ in $k[x_1, \ldots, x_{n-1}]/(x_1^{d_1}, \ldots, x_{n-1}^{d_{n-1}}). $ Now,
\begin{align*}
(x_1+ &\ldots +x_{n-1})^{d_n-1} =\\
\sum_{i=1}^{n-1} &\binom{d_n-1}{d_1-1, ..., d_{i-1}-1, d_i-2, d_{i+1}-1, ..., d_{n-1}-1} \frac{x_1^{d_1-1} \cdots x_{n-1}^{d_{n-1}-1}}{x_i}, 
\end{align*}
is nonzero if one of the coefficients is nonzero. The sum of these multinomial coefficients over $\mathbb{Z}$ is
\begin{align*}
\sum_{i=1}^{n-1}  &   \binom{d_n-1}{d_1-1, ..., d_{i-1}-1, d_i-2, d_{i+1}-1, ..., d_{n-1}-1}   =  \\  &=   \sum_{i=1}^{n-1} \binom{d_n}{d_1-1, \ldots, d_{n-1}-1}\frac{d_i-1}{d_n}=\binom{d_n}{d_1-1, \ldots, d_{n-1}-1}, 
\end{align*}
which is not divisible by $p$. Then the terms in the sum can not all be divisible by $p$, and hence one of the terms in the expansion of $(x_1+ \ldots +x_{n-1})^{d_n-1}$ is nonzero. This shows that $\cdot s : B_{d_n-2} \to B_{d_n-1}$ is injective.
\end{proof}

Proposition \ref{prop:wlpmultinom} shows that there is an infinite family of WLP-algebras which are very close to satisfy the sufficient conditions in Theorem \ref{thm:wlplimit}.

\begin{example} \label{ex:WLPprop410}
Consider the algebras  $A=k[x_1, \ldots, x_5]/(x_1^2,x_2^2, x_3^2, x_4^6, x_5^8)$  and $B=k[x_1, \ldots, x_5]/(x_1^2,x_2^2, x_3^2, x_4^6, x_5^7)$, with $k$ of characteristic 5. Then $t_A=1+1+1+5+7=15$, and $t_B=14$, so $(t_A+1)/2=8$ and $(t_B+1)/2>7$. Neither $A$ nor $B$ satisfy the condition in Theorem \ref{thm:wlplimit}. But since $\binom{8}{1,1,1,5}$ is not divisible by 5, they both have the WLP. 
\end{example}
\begin{remark}
Example \ref{ex:WLPprop410} can be generalized to $d_1=d_2=\dots = d_{n-2}=2$, $d_{n-1}=p+1$, $d_n=p+n-2$, as long as $n-2<p$. 
\end{remark}
% \begin{example}
% Suppose that $n-2 <p$, $n \geq 3$, and let $d_1 = \cdots = d_{n-2} = 2$, $d_{n-1} = p + 1$, $d_{n} = p + n - 2$. Let $k$ be a field of characteristic $p$ and let 
% $A=k[x_1,\ldots,x_n]/(x_1^{d_1}, \ldots, x_n^{d_n})$ and 
% $B = k[x_1,\ldots,x_n]/(x_1^{d_1}, \ldots, x_n^{d_n-1})$. We have 
% %$d_n = d_1 + \cdots + d_{n-1} - (n-1)$, 
% $(t_A +1)/2= d_n$,
% $(t_B+1)/2 = d_n - 1/2$,
% $p \leq d_{n}-1$, so $\max(d_1,\ldots,d_n,p) = (t_A +1)/2$ and $\max(d_1,\ldots,d_n-1,p) = (t_B + 1)/2 -1/2.$ Thus neither $A$ nor $B$ satisfy the conditions in Theorem \ref{thm:wlplimit}, but they both have the WLP according to Proposition \ref{prop:wlpmultinom}.
% \end{example}

We have now arrived at our last sufficient condition of the WLP.

\begin{proposition}\label{prop:wlpx2}
Let $A=k[x_1, \ldots, x_n]/(x_1^{d_1}, \ldots, x_n^{d_n})$ 
and suppose that $t=\sum_{i=1}^n(d_i-1)$ is odd. If $A$ has the WLP, so does $A[x]/(x^2)$. 
\end{proposition}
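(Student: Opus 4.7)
The plan is to exhibit the linear form $y = x_1 + \cdots + x_n$ as a weak Lefschetz element of $B := A[x]/(x^2) \cong k[x_1,\ldots,x_n,x]/(x_1^{d_1},\ldots,x_n^{d_n},x^2)$. By Proposition \ref{prop:wlpfinite}, this same $y$ is already a weak Lefschetz element of $A$, and since $B$ is itself a monomial complete intersection, of socle degree $t_B = t+1$, Proposition \ref{prop:maxrank} reduces the goal to showing that $\cdot y : B_i \to B_{i+1}$ is injective for every integer $i \le (t_B - 1)/2 = t/2$. Here is where the hypothesis that $t$ is odd enters crucially: the integer constraint forces $i \le (t-1)/2$.

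Next I would exploit the relation $x^2 = 0$ to split $B_i = A_i \oplus x A_{i-1}$ as a $k$-vector space, so every element takes the form $\xi = a + xb$ with $a \in A_i$ and $b \in A_{i-1}$. The product decomposes cleanly as $y\xi = ya + x(yb)$, so $y\xi = 0$ in $B$ forces $ya = 0$ and $yb = 0$ separately in $A$. Applying WLP of $A$ together with Proposition \ref{prop:maxrank} gives that $\cdot y$ on $A_j$ is injective for every $j \le (t-1)/2$; both $j = i$ and $j = i-1$ satisfy this bound once $i \le (t-1)/2$, so $a = b = 0$ and therefore $\xi = 0$. This establishes injectivity in the required range and hence the WLP of $B$.

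I do not anticipate a serious obstacle here; the only subtle point is that the oddness of $t$ is exactly what aligns the injectivity ranges of $A$ and $B$. If $t$ were even, then $t_B$ would be odd and the extreme index $i = t/2$ for $B$ would demand injectivity of $\cdot y$ on $A_{t/2}$, which already lies one degree beyond the injective range of WLP for $A$ (which stops at $j = (t-2)/2 = t/2-1$ in the even case), and the argument would genuinely break down. Thus the parity hypothesis is not a cosmetic assumption but the precise condition that makes the reduction go through.
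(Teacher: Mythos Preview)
Your proof is correct and follows essentially the same route as the paper: both arguments rest on the splitting $B_i = A_i \oplus xA_{i-1}$ and the observation that multiplication by the Lefschetz element of $A$ acts componentwise. The only cosmetic differences are that the paper works with an arbitrary weak Lefschetz element $s$ of $A$ and checks injectivity and surjectivity directly, whereas you pin down $y = x_1+\cdots+x_n$ via Proposition~\ref{prop:wlpfinite} and invoke Proposition~\ref{prop:maxrank} to reduce to injectivity alone.
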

\begin{proof}
 Let $B=A[x]/(x^2)$, and notice that $B_d=A_d \oplus xA_{d-1}$. Let $s$ be a weak Lefschetz element in $A$. Obviously $sxA_i \subseteq xA_{i+1}$, and the multiplication map $\cdot s: xA_i \to xA_{i+1}$ is injective or surjective exactly when $\cdot s: A_i \to A_{i+1}$ is. Notice that $\cdot s: A_{\frac{t-1}{2}} \to A_{\frac{t+1}{2}}$ is bijective. The map $\cdot s: B_i \to B_{i+1}$ is injective when both $\cdot s: A_i \to A_{i+1}$ and $\cdot s: xA_{i-1} \to xA_{i}$ are so, which is when $i\le (t-1)/2$. In the same way we see that it is surjective when $i \ge (t+1)/2$, and we conclude that $\cdot s: B_i \to B_{i+1}$ has maximal rank for all $i$.
\end{proof}

A reasonable question at this point is if Proposition \ref{prop:wlpx2} can prove the WLP for some algebra, for which it is not already proven by any of the other results in this section. The next example shows that this is the case. 

\begin{example}
Let $k$ be a field of characteristic 5. By Proposition \ref{prop:wlpmultinom}, the algebra $k[x_1, \ldots, x_5]/(x_1^3, x_2^3, x_3^6, x_4^6, x_5^{14})$ has the WLP. It follows from Proposition \ref{prop:wlpx2} that the algebra $k[x_1, \ldots, x_6]/(x_1^3, x_2^3, x_3^6, x_4^6, x_5^{14}, x_6^2)$ also has the WLP. This algebra is not covered by Theorem \ref{thm:wlplimit}. One may try to use Proposition \ref{prop:wlpmultinom} (with the algebra $B$), but this fails because $\binom{15}{1,2,2,5,5}$ is divisible by 5. 
\end{example}

Figure \ref{fig1} and Table \ref{table:wlpdist} is an approach of describing our current understanding of the WLP for mixed degrees. %As can be seen from Table \ref{table:wlpdist}, we have very little statistics. 
  Running the experiments in Table 1 takes almost three hours on a desktop computer. Thus we are far from being able to generate enough data to get a detailed picture.

 \begin{figure}
  \begin{center}
    \begin{tikzpicture}
      \begin{scope}
    \clip \secondcircle;
%    \fill[cyan] \thirdcircle;
      \end{scope}
      \begin{scope}
    \clip \firstcircle;
  %  \fill[cyan] \thirdcircle;
      \end{scope}
      \draw \firstcircle node[text=black,above] {$B$};
      \draw \secondcircle;
       \node[text width=3cm] at (0.4,-1.2) {$C$};
      \draw \thirdcircle;
       \node[text width=3cm] at (0.4+2,-1.2) {$D$};
            \draw \fourthcircle;% [text=black,below left]{};
   \node[text width=3cm] at (0,0) {$A$}; 
    \end{tikzpicture}
    \end{center}

       \caption{A very rough sketch of the sufficient conditions for the presence of the WLP. The area $A$  represents the space of parameters for which Theorem \ref{thm:wlplimit} does not apply, but where the WLP holds. The area $B$ represents the space of parameters for which we can use Theorem \ref{thm:wlpfamily} to reduce to a case where the WLP has already been detected. More precisely, the parameters in $B$ are of the form $(d_1,\ldots, d_{i-1},d_{i}+bp^a, d_{i+1}, \ldots, d_{j-1}, d_j+bp^a, d_{j+1},\ldots ,d_{n})$, with $a,b \geq 1$ and such that  $k[x_1,\ldots,x_n]/(x_1^{d_1},\ldots,x_n^{d_n})$ has the WLP. The area $C$ represents the space where Proposition \ref{prop:wlpmultinom} can be used, that is, the parameters in $C$ are of the form 
$(d_1,\ldots,d_n)$ and $(d_1,\ldots,d_{n-1},d_n-1)$ such that  $\binom{d_n}{d_1-1,\ldots, d_{n-1} -1}$ is not divisible by the characteristic. Finally, the area $D$ represents the space where Proposition \ref{prop:wlpx2} can be used, that is, the parameters in $D$ are of the form 
$(d_1,\ldots,d_{n-1},2)$ such that  $k[x_1,\ldots,x_{n-1}]/(x_1^{d_1},\ldots,x_{n-1}^{d_{n-1}})$ has the WLP.}

       \label{fig1}
       \end{figure}
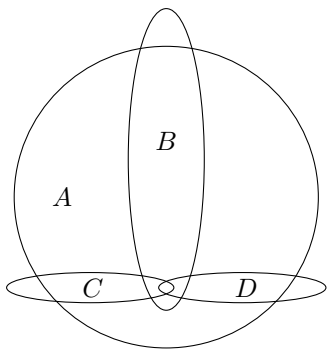
       
\begin{table}[h!]
$$
\begin{array}{cc|ccccc}
     &  p & |A| & |B| &  |C| &  |D| & |A \setminus (B \cup C \cup D)| \\
       \hline
       d_1 = 2& 5 &455 & 68 & 132 & 334 & 43 \\ 
       d_1 \geq 4& 5 &142 & 0 & 68 & 0 & 74 \\ 
      d_1 = 2 & 7 & 821 & 195 & 154 & 568 & 134 \\ 
      d_1 \geq 3 & 7 & 145 & 29 & 36 & 0 & 74 \\  
          d_1 = 2 & 11 & 833 & 550 &  154 & 498 & 86 \\ 
      d_1 \geq 3 & 11 & 318 & 272  & 51 & 0 &  45 \\ 
    d_1 = 2 & 13 & 1374 & 1071 &  250 & 775 & 109 \\ 
          d_1 \geq 3 & 13 & 621 & 540 &  100 & 0 & 81 \\

\end{array}
$$
         \caption{The distribution of the WLP for the set of algebras $\mathbb{Z}/p \mathbb{Z} [x_1,x_2,x_3,x_4,x_5]/(x_1^{d_1}, x_2^{d_2}, x_3^{d_3}, x_4^{d_4}, x_5^{d_5}), d_1  \leq d_2 \leq d_3 \leq d_4 \leq d_5 \leq 25, p \in \{5,7,11,13\}$. The sets $A$, $B$, $C$ and $D$ are as in Figure \ref{fig1}. The last column is the number of WLP algebras which we are not able to detect using Theorem \ref{thm:wlpfamily}, Proposition \ref{prop:wlpmultinom}, and Proposition \ref{prop:wlpx2}.
  Calculations were done with Macaulay2 \cite{M2}.}
    \label{table:wlpdist}
\end{table}

\begin{remark}
Vraciu \cite{vraciu} considers the harder problem of deciding the least degree of a zero-divisor on $k[x_1,\ldots,x_n]/(x_1^{d_1}, \ldots, x_n^{d_n})$, and can draw some conclusion on the WLP for mixed degrees when $k$ is infinite. When $n \geq 5$, the tuple $(d_1,\ldots,d_5,p) = (4,4,4,4,5,3)$ was detected \cite[Proposition 5.6]{vraciu}, and when $n=4$, a large family of WLP algebras is described \cite[Proposition 5.6]{vraciu}.
\end{remark}

\subsection{The Fr\"oberg conjecture in positive characteristic}
The results on the WLP has some applications to the Fr\"oberg conjecture in positive characteristic.
Fr\"oberg's original conjecture is over $\mathbb{C}$.

\begin{conjecture}[Fr\"oberg \cite{froberg}] \label{conj:froberg}
Let $R = \mathbb{C}[x_1,\ldots,x_n]$. Then there exist forms $f_1,\ldots,f_m$ of degrees $d_1,\ldots,d_m$ with $m \geq n+1$ such that
$(R/(f_1,\ldots,f_m))(t) = \left[\prod(1-t^{d_i})/(1-t)^n \right].$
\end{conjecture}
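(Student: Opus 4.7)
The plan is inductive on the number of forms $m$, using semicontinuity of Hilbert series together with strong Lefschetz properties as the engine of the induction. First I would observe that the coefficients of $(R/(f_1,\ldots,f_m))(t)$ are upper semicontinuous functions of $(f_1,\ldots,f_m)$ in the product of the relevant spaces of forms, so the set of tuples realizing any given Hilbert function is constructible and the set of tuples achieving coefficient-wise minima contains a non-empty Zariski open set. Fr\"oberg's original argument in \cite{froberg} shows that $[\prod(1-t^{d_i})/(1-t)^n]$ is a coefficient-wise \emph{lower} bound on $(R/(f_1,\ldots,f_m))(t)$, so the existential statement is equivalent to the claim that a generic $m$-tuple achieves this bound, and it suffices to exhibit a single witnessing tuple.

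The induction step fixes forms $f_1,\ldots,f_{m-1}$ realizing the bound and asks for $f_m$ of degree $d_m$ such that multiplication by $f_m$ on $R/(f_1,\ldots,f_{m-1})$ has maximal rank in every degree; this clearly propagates the equality by the ``truncated Hilbert series'' characterization used in the proof of Lemma \ref{lemma:wlp-maxrank}. For the base case $m=n+1$ take $f_i=x_i^{d_i}$ for $i\le n$; it then remains to find a form $g$ of degree $d_{n+1}$ such that multiplication by $g$ on the monomial complete intersection $R/(x_1^{d_1},\ldots,x_n^{d_n})$ has maximal rank in every degree. Stanley's result \cite{stanley}, quoted in the introduction, furnishes a strong Lefschetz element $\ell$ in characteristic zero, and $g=\ell^{d_{n+1}}$ then works. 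This recovers the classical corollary of Stanley for $n$ variables and $n+1$ forms that the paper has already alluded to.

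The main obstacle is the induction step for $m\ge n+2$. The intermediate algebra $R/(f_1,\ldots,f_{m-1})$ is no longer a monomial complete intersection, so Stanley's theorem does not apply directly, and the formal-derivative arguments of Section \ref{section Stanley} (specifically Lemma \ref{lemma:deriv} and Theorem \ref{thm:stanley}) rely crucially on the relations being monomials in a single variable each. What one really needs is a strong Lefschetz property for generic artinian quotients by an arbitrary number of forms, but this is essentially a reformulation of Fr\"oberg's conjecture itself and yields no independent leverage.

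A concrete way around this obstacle is a degeneration strategy: connect the generic $m$-tuple by a flat family to a special tuple --- for instance a highly symmetric or iteratively monomial configuration, perhaps built by combining variables in blocks in the spirit of \cite{reid} --- for which the Hilbert series can be computed combinatorially, and then invoke upper semicontinuity to transfer the Fr\"oberg bound back to the generic point. The hard part, and the reason Fr\"oberg's conjecture has resisted proof in full generality, is the construction of such a special configuration whose Hilbert function is both combinatorially tractable and provably equal to $[\prod(1-t^{d_i})/(1-t)^n]$ for arbitrary $n$, $m$ and $(d_1,\ldots,d_m)$; all known witnesses cover only restricted regimes, so the conjecture remains open and this plan is at best a strategy rather than a complete proof.
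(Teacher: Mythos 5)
The statement you were asked to prove is stated in the paper as a \emph{conjecture}, and the paper offers no proof of it: it only records the known partial results (the cases $n=2$ by Fr\"oberg, $n=3$ by Anick, $m=n+1$ by Stanley, and the Hochster--Laksov bound in low degrees) and then uses the $m=n+1$ case as motivation for its WLP results. So there is no ``paper's own proof'' to compare against, and your proposal --- as you yourself say in the last sentence --- is a strategy, not a proof. The genuine gap is exactly where you locate it: the induction step for $m\ge n+2$ requires a maximal-rank multiplication statement on the non-monomial intermediate algebra $R/(f_1,\ldots,f_{m-1})$, which is equivalent in strength to the conjecture itself, and no witnessing special configuration is known in general. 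Your base case $m=n+1$ via Stanley's strong Lefschetz element is correct and is precisely the ``immediate corollary'' the paper alludes to in the introduction (and generalizes in positive characteristic via Lemma \ref{lemma:wlp-maxrank} and the WLP results).

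One further caution on the framing: you assert that Fr\"oberg's original argument gives $\left[\prod(1-t^{d_i})/(1-t)^n\right]$ as a \emph{coefficient-wise} lower bound for $(R/(f_1,\ldots,f_m))(t)$. What Fr\"oberg proved is an inequality in the lexicographic sense (the actual series is $\ge$ the conjectured one at the first coefficient where they differ); the coefficient-wise inequality for all degrees is not known in general. This does not affect the reduction ``generic tuple attains the minimum, so one witness suffices,'' since the minimal series is attained on a non-empty open set by semicontinuity, but it does mean you cannot freely interchange ``achieves the lower bound'' with ``equals the conjectured series'' degree by degree without the conjecture already in hand. The honest conclusion is that the statement remains open and no proof should be claimed.
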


The conjecture has been shown to be true when $n = 2$, see \cite{froberg},  when $n = 3$, see \cite{anick}, and when $m = n + 1$, see \cite{stanley}. Moreover, Hochster and Laksov \cite{hochsterlaksov} has shown that the series is correct up to degree $\min(d_1,\ldots,d_m)+1$.

The conjecture is believed to be true not only over $\mathbb{C}$, but over any field $k$. Fr\"oberg's proof of the two variable case is independent of the underlying field, while Anick's proof in three variables holds in characteristic $p$ only if $k$ is infinite. The Hochster-Laksov result also works in characteristic $p$ under the assumption that $k$ is infinite. In \cite{nicklasson}, it is shown that when $m$ is large enough and the degrees $d_i$ are uniform, then the Hochster-Laksov result holds also when $k$ is finite.

However, Stanley's result in the $m=n+1$ case \emph{does} require a field of characteristic zero. Thus the $m=n+1$ case is the most attractive unproven part of the conjecture in positive characteristic. We will show below that partial results can be derived by using a connection to the WLP.

\begin{theorem}
Fr\"oberg's conjecture over a field $k$ of characteristic $p$ in $n$ variables and $n+1$ forms of degrees $d_1,\ldots,d_{n+1}$ holds true if the algebra $k[x_1,\ldots,x_{n+1}]/(x_1^{d_1}, \ldots, x_{n+1}^{d_{n+1}})$
has the WLP. Especially, the results on the WLP in Theorem \ref{thm:wlplimit}, Theorem \ref{thm:wlpfamily}, Proposition \ref{prop:wlpmultinom}, and Proposition \ref{prop:wlpx2} are positive results on the Fr\"oberg conjecture in $n-1$ variables and $n$ forms of degrees $d_1,\ldots,d_n$.
\end{theorem}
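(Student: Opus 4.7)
The plan is to exhibit an explicit candidate realizing the Fr\"oberg Hilbert series by choosing the natural witness suggested by Lemma \ref{lemma:wlp-maxrank}. Set $R=k[x_1,\dots,x_n]$ and take $f_i=x_i^{d_i}$ for $i=1,\dots,n$ and $f_{n+1}=(x_1+\cdots+x_n)^{d_{n+1}}$. Let $B=R/(f_1,\dots,f_n)=k[x_1,\dots,x_n]/(x_1^{d_1},\dots,x_n^{d_n})$, whose Hilbert series is $B(t)=\prod_{i=1}^n(1-t^{d_i})/(1-t)^n$. Then $R/(f_1,\dots,f_{n+1})=B/(f_{n+1})$, and Fr\"oberg's predicted series for $n$ variables and $n+1$ forms is $\bigl[(1-t^{d_{n+1}})B(t)\bigr]$.

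Next, I would invoke the truncated-Hilbert-series reformulation of maximal rank recalled right before Lemma \ref{lemma:wlp-maxrank}: multiplication by a form $f$ of degree $d$ on a graded algebra $C$ has maximal rank in every degree if and only if $(C/fC)(t)=[(1-t^d)C(t)]$. Applied to $B$ and $f_{n+1}$, this says that the Hilbert series of $R/(f_1,\dots,f_{n+1})$ matches the Fr\"oberg prediction \emph{exactly} when multiplication by $(x_1+\cdots+x_n)^{d_{n+1}}$ has maximal rank in every degree on $B$.

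The key step is then Lemma \ref{lemma:wlp-maxrank} itself, applied with $n$ replaced by $n+1$: multiplication by $(x_1+\cdots+x_n)^{d_{n+1}}$ on $B$ has maximal rank in every degree if and only if $k[x_1,\dots,x_{n+1}]/(x_1^{d_1},\dots,x_{n+1}^{d_{n+1}})$ has the WLP. Chaining the two equivalences yields the first assertion, and the forms $f_1,\dots,f_{n+1}$ above serve as the explicit witness required by Conjecture \ref{conj:froberg}.

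For the second assertion, each of Theorem \ref{thm:wlplimit}, Theorem \ref{thm:wlpfamily}, Proposition \ref{prop:wlpmultinom} and Proposition \ref{prop:wlpx2} furnishes sufficient conditions for the WLP of an algebra $k[x_1,\dots,x_n]/(x_1^{d_1},\dots,x_n^{d_n})$, which by the first part (with the index shift $n+1\mapsto n$) translates directly into Fr\"oberg's conjecture in $n-1$ variables and $n$ forms of degrees $d_1,\dots,d_n$. There is essentially no analytic obstacle here; the only care required is the bookkeeping of indices when invoking Lemma \ref{lemma:wlp-maxrank}, since the WLP of an $(n+1)$-variable monomial complete intersection corresponds to the $(m=n+1)$ case of Fr\"oberg in $n$ variables.
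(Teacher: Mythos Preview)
Your proof is correct and follows essentially the same approach as the paper: both exhibit the witness $f_i=x_i^{d_i}$, $f_{n+1}=(x_1+\cdots+x_n)^{d_{n+1}}$ and invoke Lemma~\ref{lemma:wlp-maxrank} (with the index shift $n\mapsto n+1$) to identify the Fr\"oberg Hilbert series with the WLP of the $(n{+}1)$-variable monomial complete intersection. You spell out the intermediate equivalence via the truncated-Hilbert-series characterization of maximal rank a bit more explicitly than the paper does, but the argument is the same.
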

\begin{proof}
 Let $R = k[x_1,\ldots,x_n]/(x_1^{d_1}, \ldots, x_n^{d_n}, (x_1+\ldots + x_n)^{d_{n+1}}).$ By Lemma \ref{lemma:wlp-maxrank}, $R(t) =   \left[\prod(1-t^{d_i})/(1-t)^n \right]$ if and only if
$k[x_1,\ldots,x_{n+1}]/(x_1^{d_1}, \ldots, x_{n+1}^{d_{n+1}})$ has the WLP. Thus if $k[x_1,\ldots,x_{n+1}]/(x_1^{d_1}, \ldots, x_{n+1}^{d_{n+1}})$ has the WLP, the Fr\"oberg conjecture is satisfied for 
$n+1$ forms of degrees $d_1,\ldots,d_{n+1}$ in $k[x_1,\ldots,x_n]$. 
\end{proof}

%Especially, it follows that the Fr\"oberg conjecture 

\begin{remark} \label{rmk:noninjective}
Even if we replace $(x_1+\cdots + x_n)^{d_{n+1}}$ by a general form $f$ of degree $d_{n+1}$, we should not expect that the multiplication by $f$ on the algebra $k[x_1,\ldots,x_n]/(x_1^{d_1}, \ldots, x_n^{d_n})$
has maximal rank in each degree in characteristic $p$. For instance, let $A = k[x_1,\ldots, x_6]/(x_1^2,\ldots,x_6^2)$, with $k$ of characteristic $2$. Then multiplication by any form $f$ of degree two is not injective, since $H(2) = H(4) = 15$, but $f^2 = 0$ in $A$. It follows that in order to attack the Fr\"oberg conjecture in full for $m = n + 1$ by means of specific forms, another choice than powers of the variables has to be used.
\end{remark}

\begin{problem} 

Inspired by Remark \ref{rmk:noninjective}, we would like to address the problem of the characterization of the algebras for which there exists a form $f$ such that $k[x_1,\ldots,x_n]/(x_1^{d_1},\ldots,x_n^{d_n}, f)$ has the Hilbert series conjectured by Fr\"oberg, with $k$ of positive characteristic.
\end{problem}

\begin{ack}
The authors would like to thank Christian Gottlieb for valuable comments on drafts on this manuscript.
\end{ack}


\begin{thebibliography}{99}

\bibitem{anick}
D. J. Anick, Thin Algebras of Embedding Dimension Three, J. Algebra, 100 (1986), 235--259.
%69 

\bibitem{brenner} H. Brenner, A. Kaid, A note on the weak Lefschetz property of monomial complete intersections in positive characteristic, Collect. Math. 62 (2011), no. 1, 85--93.

\bibitem{cook} D. Cook II, The Lefschetz properties of monomial complete intersections in positive characteristic, J. Algebra 369 (2012), 42--58.

\bibitem{froberg}
R. Fr\"oberg, An inequality for Hilbert series of graded algebras, Math. Scand. 56 (1985), 117--144.


\bibitem{M2} D. Grayson, M. Stillman,
          Macaulay2, a software system for research
                   in algebraic geometry, Available at {\tt www.math.uiuc.edu/Macaulay2}.
                   
\bibitem{harima} T. Harima, J. Migliore, U. Nagel and J. Watanabe, The Weak and Strong
Lefschetz Properties for Artinian K-Algebras, J. Algebra 262 (2003), 99--126.

\bibitem{hochsterlaksov}
M. Hochster, D. Laksov, The linear Syzygies of generic forms, Comm. Algebra, 15, no. 1-2 (1987), 227--239       
   

\bibitem{kustin}
A. Kustin, A. Vraciu, The weak Lefschetz property for monomial complete intersections in positive
characteristic, Trans. Amer. Math. Soc. 366 (2014) 4571--4601.

\bibitem{li}
J. Li and F. Zanello, Monomial complete intersections, the weak Lefschetz propery and place
partitions, Discrete math. 310 (2010), no. 24, 3558--3570.

\bibitem{lind}
M. Lindsey, A class of Hilbert Series and the strong Lefschetz property, Proc. Amer. Math. Soc. 139 (1) (2011) 79--92.

%\bibitem{hodges}
%T.J. Hodges, S.D. Molina, J. Schlather, On the Existence of Semi-Regular Sequences, http://arxiv.org/abs/1412.7865.
   
\bibitem{mig_miro-roig}
J. Migliore and R. M. Mir\'o-Roig, Ideals of general forms and the ubiquity of the weak Lefschetz property, J. Pure Appl. Algebra 182 (2003), no. 1, 79--107.

\bibitem{migliore} J. Migliore, R. Miró-Roig, U. Nagel, Monomial ideals, almost complete intersections and the weak Lefschetz property, Trans. Amer. Math. Soc. 363 (2011), 229--257.

\bibitem{atour} J. Migliore, U. Nagel, A tour of the weak and strong Lefschetz properties, J. Commut. Algebra
5, no.3 (2013), 329--358.
% Tar upp pos char i slutet

\bibitem{nicklasson} L. Nicklasson, On the Hilbert series of ideals generated by generic forms, to appear in Comm. Algebra, DOI 10.1080/00927872.2016.1236931.
                   
\bibitem{reid}
L. Reid, L.G. Roberts, M. Roitman, On complete intersections and their Hilbert functions, Canad. Math. Bull. Vol. 34 no. 4 (1991), 525--535.

\bibitem{stanley}
R. Stanley, Weyl groups, the hard Lefschetz theorem and the Sperner property, Siam J. Alg. Disc. Math. 1 (1980), 168--184.
%273

\bibitem{vraciu} A. Vraciu, On the degrees of relations on $x_1^{d_1}, \ldots, x_n^{d_n}, (x_1+\cdots+x_n)^{d_{n+1}}$ in positive characteristic, J. Algebra, 423 (2015), 916--949.

\end{thebibliography}
\end{document}